\newcommand{\nullmat}{O}
\newcommand{\TP}{{T}}
\newcommand{\HC}{{H}}
\newcommand{\pp}{\phantom{+}}
\newcommand{\figscale}{0.46}
\newcommand{\figwidth}{0.49}
\begin{document}

\title{
A thick-restart Lanczos type method for Hermitian $J$-symmetric eigenvalue problems
}


\author{Ken-Ichi Ishikawa \and Tomohiro Sogabe}


\institute{K.-I. Ishikawa \at
Core of Research for the Energetic Universe, 
Graduate School of Science, 
Hiroshima University, 
Higashi-Hiroshima 739-8526, Japan \\
ORCID: 0000-0001-6425-1894 \\
Tel.: +81-82-424-7363, Fax:  +81-82-424-0717\\
\email{ishikawa@theo.phys.sci.hiroshima-u.ac.jp}
   \and
T. Sogabe \at
Department of Applied Physics, Nagoya University,
Furo-cho, Chikusa-ku, Nagoya 464-8603, Japan\\
E-mail: sogabe@na.nuap.nagoya-u.ac.jp
}

\date{Received: date / Accepted: date}

\maketitle

\begin{abstract}
A thick-restart Lanczos type algorithm is proposed for Hermitian $J$-sym\-metric matrices.
Since Hermitian $J$-symmetric matrices possess doubly degenerate spectra
or doubly multiple eigenvalues
with a simple relation between the degenerate eigenvectors,
we can improve the convergence of the Lanczos algorithm by restricting the search space 
of the Krylov subspace to that spanned by one of each pair of the degenerate eigenvector pairs.
We show that the Lanczos iteration is compatible with the $J$-symmetry, so that the subspace 
can be split into two subspaces that are orthogonal to each other. 
The proposed algorithm searches for eigenvectors in one of the two subspaces without the multiplicity.
The other eigenvectors paired to them can be easily reconstructed with the simple relation from the $J$-symmetry.
We test our algorithm on randomly generated small dense matrices and
a sparse large matrix originating from a quantum field theory.

\keywords{Eigensolver\and Lanczos method\and Hermitian matrix\and $J$-symmetric matrix}
 \subclass{ 65F15 \and 15A18 \and 15A23  }
\end{abstract}

\section{Introduction}
\label{intro}

In theoretical physics, symmetry is an important guiding principle to search for laws of nature.
When numerically simulating a physics system that has a symmetry, it is preferable to retain 
the symmetry property in the numerical simulation algorithm. 
Hermitian matrices often appear in analyzing physics systems,
and spectra analysis is important to understand the nature.
In this case the Hermitian symmetry must be considered in the spectra analysis and the eigensolver algorithm.
Developing eigensolver algorithms for spectra analysis is one of the major research areas in applied mathematics.

In physics, matrices having both Hermiticity symmetry and $J$-symmetry exist.
Let $A$ be a Hermitian and $J$-symmetric matrix in $\mathbb{C}^{n\times n}$.
The $J$-symmetric property of $A$ is defined by
\begin{align}
  J A J^{-1} &= A^{\TP}, \quad J^{\TP} = -J,\quad J^{\TP}=J^{-1},
\label{eq:symmetry}
\end{align}
where $J$ is a skew-symmetric matrix in $\mathbb{R}^{n\times n}$. 
We take this definition 
from~\cite{PETKOVIVANOV1994} for 
the $J$-symmetry~\footnote{The literature shows 
                           another definition, $(JA)=(JA)^T$, for the $J$-symmetry;
                           however, this is opposite from ours because $JAJ^{-1}=-A^{T}$.}.
Throughout this paper, we employ the following notations:
$^{\TP}$ for matrix transposition, 
$^{\HC}$ for Hermitian conjugation, and $^*$ for complex conjugation.
We use $I$ and $\nullmat$ to denote the identity and null matrices with appropriate sizes, respectively.
The eigenvalues of $A$ are doubly degenerated, or doubly multiple, with the $J$-symmetry. 
Let $x$ be an eigenvector of $A$ associated 
to the eigenvalue $\lambda$, satisfying $A x = \lambda x$. 
The vector defined by
\begin{align}
y \equiv J x^{*},
\label{eq:pairev}
\end{align}
is also the eigenvector of $A$ associated to $\lambda$ 
because of the symmetry~\eqref{eq:symmetry}.

Any Hermitian $J$-symmetric matrix $A$ has the following block structure.
Without loss of generality, $J$ is
\begin{align}
  J &=
  \begin{bmatrix*}[r]
      O  & -I\\
      I  &  O
  \end{bmatrix*},
\end{align}
where the size of each block is $(n/2)\times(n/2)$ and $n$ is an even number.
Based on this, the symmetry \eqref{eq:symmetry} requires
\begin{align}
  A &=
      \begin{bmatrix*}[r]
         A_{11}      & A_{12} \\
         A_{12}^{\HC}& A_{11}^{\TP}
      \end{bmatrix*},
\label{eq:blockform}
\end{align}
where $A_{11}$ and $A_{12}$ are $(n/2)\times(n/2)$ matrices satisfying
$A_{11}^{\HC}=A_{11}$ and $A_{12}^{\TP}=-A_{12}$. 
We can also explicitly construct eigenvectors $x$ and $y=J x^*$ associated with 
an eigenvalue $\lambda$ in the block structure. 
Then $A$ can be diagonalized as
\begin{align}
  A &= U
      \begin{bmatrix*}[r]
          \Lambda & O \\ O & \Lambda
      \end{bmatrix*} U^{\HC},\quad
  U =
      \begin{bmatrix*}[r]
           X_1 &    -X_2^{*}\\
           X_2 & \pp X_1^{*}
      \end{bmatrix*},
\label{eq:diagblockform}
\end{align}
where $\Lambda =\mathrm{diag}(\lambda_1,\lambda_2,\dots,\lambda_{n/2})$, and
$X_1$ and $X_2$ are $(n/2)\times(n/2)$ matrices satisfying 
$X_1^{\HC}X_1+X_2^{\HC}X_2=I$ and
$X_1^{\TP}X_2-X_2^{\TP}X_1=O$.

To investigate the spectrum of a sparse matrix in a large dimension, iterative eigensolver algorithms are generally employed.
Iterative eigensolver algorithms in the literature, such as the Lanczos algorithm for Hermitian matrices, 
do not consider the $J$-symmetry. Therefore, even after convergence on one of a pair of the degenerate eigenvector pairs, 
the algorithm continues to search for the other eigenvector associated to the converged eigenvalue 
even though the other eigenvector can be easily reconstructed from the converged one.
In this article, we restrict ourselves to improving the Lanczos algorithm 
for large Hermitian $J$-symmetric matrices by incorporating the $J$-symmetry property.

We could improve the convergence of the Lanczos algorithm by restricting the search space of 
the Krylov subspace of $A$ to that spanned by one of each pair of the doubly degenerate eigenvector pairs
by imposing complete orthogonality to the subspace spanned by the other eigenvectors paired to them.
However, this strategy cannot be directly realized before knowing the invariant subspace of $A$.
We find that the Krylov subspace $\mathcal{K}_k(A,v)=\mathrm{span}\{v,Av,\dots,A^{k-1}v\}$ generated 
with the Lanczos algorithm with a starting vector $v$ 
is orthogonal to $\mathcal{K}_k(A,Jv^*)=\mathrm{span}\{Jv^*,A Jv^*,\dots,\allowbreak A^{k-1} Jv^*\}$ 
with the same Lanczos algorithm 
with the starting vector $Jv^*$.
On the basis of this property, we can obtain only one half of the degenerate eigenvectors in $\mathcal{K}_k(A,v)$ and
can reconstruct the other half via~\eqref{eq:pairev}.
This can achieve the strategy just stated above.

Early attempts have been made to incorporate the symmetry property or structure of matrices into eigensolver algorithms 
in~\cite{DONGARRA198427,PETKOVIVANOV1994} in which
dense matrix algorithms were developed for the Hermitian $J$-symmetric matrices that appeared 
in a quantum mechanical system with time-reversal and inversion symmetry.
The dense matrix algorithms retain or respect the matrix structure \eqref{eq:blockform} 
during the computation~\cite{DONGARRA198427,PETKOVIVANOV1994}.
To focus on the algorithm for large sparse matrices, 
we do not discuss algorithms for dense matrices in this paper.
The one similar to our algorithm that incorporates the symmetry properties of matrices
was studied in~\cite{BENNER199775,BENNER2011578,BENNER2018407,doi:10.1137/S1064827500366434}, 
in which the Lanczos type eigensolvers for complex $J$-skew-symmetric matrices or 
Hamiltonian matrices were investigated.\footnote{
The definition of the $J$-symmetry employed in~\cite{BENNER199775,BENNER2011578,BENNER2018407,doi:10.1137/S1064827500366434}
is opposite to that of~\cite{DONGARRA198427,PETKOVIVANOV1994} and ours; therefore, their matrices are $J$-skew-symmetric compared to our definition.}
Because the symmetry treated in~\cite{BENNER199775,BENNER2011578,BENNER2018407,doi:10.1137/S1064827500366434}
is different from the Hermitian $J$-symmetry,
their algorithm cannot be directly applied to Hermitian $J$-symmetric matrices, even though it is based on the Lanczos algorithm.
See also~\cite{doi:10.1007/3-540-28502-4} for numerical algorithms for structured eigenvalue problems.

This paper is organized as follows. In the next section, we prove
the orthogonality between the Krylov subspace 
$\mathcal{K}_k(A,v)$ generated with the Lanczos iteration to $\mathcal{K}_k(A,Jv^*)$ with the same Lanczos iteration. 
Having observed the orthogonality, we describe the restarting method based on 
the Krylov--Schur transformation method~\cite{Stewart:2001:KAL:587707.587809} and
the thick-restart method~\cite{WU1999156,doi:10.1137/S0895479898334605}.
Then, we propose the thick-restart Lanczos algorithm for Hermitian $J$-symmetric matrices in Section~\ref{sec:3}.
We estimate the computational cost in terms of the matrix-vector multiplication.
In Section~\ref{sec:4}, we test the proposed algorithm for two types of the Hermitian $J$-symmetric matrix.
The one type is an artificial randomly generated matrix satisfying the structure of \eqref{eq:blockform} and \eqref{eq:diagblockform}, and
the other matrix originates from quantum field theory. 
The convergence behavior and the computational cost are compared with those of 
the standard thick-restart Lanczos algorithm.
We summarize this paper in the last section.

\section{Lanczos iteration and \texorpdfstring{$J$}{J}-symmetry}
\label{sec:2}
The Lanczos iteration transforms $A$ to a tridiagonal form by generating the orthonormal basis vectors. 
The Lanczos decomposition after $m$-step iteration starting with a unit vector $v_1$ is given by
\begin{align}
A V_{m} = V_{m} T_{m} + \beta_{m} v_{m+1} e_{m}^{\TP},
\label{eq:LancV}
\end{align}
where $V_m=\left[v_1,v_2,\dots,v_m\right]$ and $v_j \in \mathbb{C}^n$, 
and $e_m$ is the $m$-dimensional unit vector in the $m$-th direction.
The basis vectors are orthonormal, $V_{m+1}^{\HC}V_{m+1}=I$.
$T_m$ denotes the $m\times m$ tridiagonal matrix that is given by
\begin{align}
T_m =
  \begin{bmatrix*}[c]
     \alpha_1 & \beta_1  &  \\
     \beta_1  & \alpha_2 & \beta_2 & \\
              & \beta_2  & \alpha_3 & \beta_3 \\
              &          & \ddots   & \ddots & \ddots \\
              &          &          & \beta_{m-2} & \alpha_{m-1} & \beta_{m-1} \\
              &          &          &             &  \beta_{m-1} & \alpha_m 
  \end{bmatrix*}.
\end{align}
The approximate eigenpairs are obtained from the eigenpairs of $T_m$. 
Because of the Hermiticity property of $A$ and the recurrence structure of the Lanczos iteration, 
all $\alpha_j$ and $\beta_j$ can be taken to be real.
Thus, $T_m$ becomes a real symmetric matrix. 
In the standard Lanczos iteration, the Hermitian symmetry of $A$ is respected in the form $T_m$.

We also investigate the $J$-symmetry property of decomposition~\eqref{eq:LancV}. 
After taking the complex conjugate of~\eqref{eq:LancV} followed by multiplying it by $J$ from the left-hand side,
we have
\begin{align}
JA^{*} V_m^* =  J V_m^* T_m + \beta_{m} Jv_{m+1}^* e_m^{\TP}.
\end{align}
Using the $J$-symmetry and Hermiticity, $JA^*=A^{\HC}J=AJ$, and by defining $w_j \equiv Jv_j^*$, we obtain
\begin{align}
A W_m &=  W_m T_m + \beta_{m} w_{m+1} e_m^{\TP},
\label{eq:LancW}
\\
W_m &= \left[ w_1, w_2, \dots, w_m\right].
\end{align}
The columns of $W_{m+1}$ are also orthonormal, $W_{m+1}^{\HC}W_{m+1}=I$. 
Consequently, the vectors $W_{m+1}$ have the same Lanczos decomposition as that of $V_{m+1}$ when 
it starts from $w_1 = Jv_1^*$.
Furthermore, we can prove the orthogonality between $W_{m+1}$ and $V_{m+1}$.

To implement the thick-restart method~\cite{WU1999156,doi:10.1137/S0895479898334605}, 
we show the $J$-symmetry property for a generalized decomposition similar to 
the Krylov--Schur decomposition~\cite{Stewart:2001:KAL:587707.587809}
instead of the Lanczos decomposition in the following part of the paper. 
After preparing two lemmas related to the $J$-symmetry, we prove 
the main theorem for the orthogonality property between $W_{m+1}$ and $V_{m+1}$
on the generalized decomposition.
Subsequently, the orthogonality properties for the Lanczos decomposition and the thick-restart method
are proved as corollaries.

We first show orthogonal properties between $v$ and $w=Jv^*$.
\begin{lemma}
\label{lemma1}
Let $v$ be an arbitrary vector in $\mathbb{C}^n$,
and $J$ and $A$ be matrices satisfying~\eqref{eq:symmetry}.
Then, the vector $w$ defined by $w \equiv J v^*$ satisfies
\begin{align}
w^{\HC} v   & = 0,\\
w^{\HC} A v & = 0.
\end{align}
\end{lemma}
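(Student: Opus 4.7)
The plan is to push everything to the form $v^{\TP} S v$ for some (complex-entried but) skew-symmetric matrix $S$, and then invoke the standard one-line argument that such quadratic forms vanish because a scalar equals its own transpose.

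First I would rewrite $w^{\HC}$ in a more usable form. Since $J\in\mathbb{R}^{n\times n}$ with $J^{\TP}=-J$, we have $J^{\HC}=J^{\TP}=-J$, so
\begin{equation*}
w^{\HC} \;=\; (Jv^{*})^{\HC} \;=\; v^{\TP} J^{\HC} \;=\; -v^{\TP} J.
\end{equation*}
Thus both identities reduce to showing $v^{\TP} J v = 0$ and $v^{\TP} JA\, v = 0$.

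For the first, I would simply note that $v^{\TP} J v$ is a scalar, so it equals its transpose; using $J^{\TP}=-J$,
\begin{equation*}
v^{\TP} J v \;=\; (v^{\TP} J v)^{\TP} \;=\; v^{\TP} J^{\TP} v \;=\; -v^{\TP} J v,
\end{equation*}
which forces $v^{\TP} J v = 0$, and hence $w^{\HC} v = 0$.

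For the second, the key step is to observe that the combination $JA$ inherits skew-symmetry from $J$ via the $J$-symmetry of $A$. Indeed, \eqref{eq:symmetry} gives $JA = A^{\TP} J$, so
\begin{equation*}
(JA)^{\TP} \;=\; A^{\TP} J^{\TP} \;=\; -A^{\TP} J \;=\; -JA.
\end{equation*}
Applying the same scalar-transpose trick to $v^{\TP}(JA)v$ then yields $v^{\TP} JA\, v = 0$, which gives $w^{\HC} A v = 0$. I do not expect a real obstacle here; the only thing to keep straight is that $v$ is complex, so we must use the transpose identity $v^{\TP} S v = -v^{\TP} S v$ rather than any Hermitian-form argument, and we must not conjugate $v$ when manipulating these bilinear expressions.
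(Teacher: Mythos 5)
Your proposal is correct and follows essentially the same route as the paper's proof: reduce $w^{\HC}v$ and $w^{\HC}Av$ to bilinear forms $v^{\TP}Jv$ and $v^{\TP}JAv$, then use the scalar-transpose identity together with $J^{\TP}=-J$ and $JA=A^{\TP}J$ to conclude they vanish. Your version packages the second identity a bit more cleanly by explicitly noting that $JA$ is skew-symmetric, but the underlying argument is the same.
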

\begin{proof}
From the definition of $w$, it follows that
\begin{align}
  w^{\HC} v = (J v^*)^{\HC} v = v^{\TP} J^{\HC} v = v^{\TP} J^{\TP} v = - v^{\TP} J v,
\label{eq:wTvidentity}
\end{align}
where  $J^{\TP} = -J$ is used. 
The identity 
$v^{\TP} Jv = v^{\TP} J^{\TP} v$ and
\eqref{eq:wTvidentity} yield
\begin{align}
  w^{\HC} v = -w^{\HC} v.
\end{align}
Thus, $w^{\HC} v = 0$. Similarly, 
\begin{align}
  w^{\HC} A v = (J v^*)^{\HC} A v = v^{\TP} J^{\HC} A v = v^{\TP} J^{\TP} A v = -v^{\TP} J A v = - v^{\TP} A^{\TP} J v,
\label{eq:wTAvidentity}
\end{align}
where $J^{\TP}=-J$ and $JA=A^{\TP}J$ are used.
The identity $v^{\TP} A^{\TP} J v= v^{\TP} J^{\TP} A v$
and \eqref{eq:wTAvidentity} yield
\begin{align}
  w^{\HC} A v =  - w^{\HC} A v.
\end{align}
Thus, $w^{\HC} A v = 0$.
\qed
\end{proof}

We have the following lemma that is a generalization of the relation between \eqref{eq:LancV} and~\eqref{eq:LancW}.
\begin{lemma}
 \label{lemma2}
 Let $v_1,v_2,\dots,v_k,v_{k+1}$ be the vectors having the following relation:
 \begin{align}
   A V_k = V_k S_k + v_{k+1} b^{\TP},
\label{eq:KSV}
 \end{align}
 where $V_k=[v_1,\dots,v_k]$, $S_k$ is a matrix in $\mathbb{R}^{k\times k}$,
 and $b$ is a vector in $\mathbb{R}^k$
 for a Hermitian $J$-symmetric matrix $A$ satisfying~\eqref{eq:symmetry}.
 Then, the vectors $w_j = J v_j^*$ $(j=1,\dots,k+1)$ satisfy the following decomposition:
 \begin{align}
  A W_k = W_k S_k + w_{k+1} b^{\TP},
\label{eq:KSW}
 \end{align}
where $W_k=JV_k^*=[w_1,\dots,w_k]$.
\end{lemma}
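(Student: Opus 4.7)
The plan is to mirror the derivation that the excerpt already carried out at the level of the Lanczos decomposition~\eqref{eq:LancV}--\eqref{eq:LancW}, but applied to the more general decomposition~\eqref{eq:KSV}. The key identity to exploit is the combined Hermitian/$J$-symmetric relation $JA^{*}=A^{\HC}J=AJ$, which was already extracted in the discussion preceding Lemma~\ref{lemma1}.

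First I would take the entrywise complex conjugate of~\eqref{eq:KSV}. Since by hypothesis $S_k\in\mathbb{R}^{k\times k}$ and $b\in\mathbb{R}^{k}$, the right-hand side becomes $V_k^{*} S_k + v_{k+1}^{*} b^{\TP}$, while the left-hand side is $A^{*} V_k^{*}$. Next I would multiply both sides by $J$ from the left, obtaining
\begin{align*}
 J A^{*} V_k^{*} \;=\; J V_k^{*} S_k + J v_{k+1}^{*} b^{\TP}.
\end{align*}
Applying $JA^{*}=AJ$ to the left-hand side, and recognizing $JV_k^{*}=W_k$ and $Jv_{k+1}^{*}=w_{k+1}$ from the definition of the $w_j$, gives exactly the desired decomposition~\eqref{eq:KSW}.

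There is essentially no obstacle here beyond the bookkeeping: the proof is a three-line manipulation. The only point that deserves care is verifying that $S_k$ and $b$ are indeed taken to be real in the hypotheses, because otherwise conjugation would transform them into $S_k^{*}$ and $b^{*}$ and the conclusion would not be stated with the same $S_k$ and $b$. Since this reality is asserted in the statement of the lemma, the argument goes through verbatim and is the natural generalization of the passage from~\eqref{eq:LancV} to~\eqref{eq:LancW}.
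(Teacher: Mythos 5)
Your proof is correct and matches the paper's proof: the paper also takes the complex conjugate of~\eqref{eq:KSV}, left-multiplies by $J$, and uses $A^{*}=A^{\TP}$ together with $JA^{\TP}=AJ$ (equivalently your $JA^{*}=AJ$) to arrive at~\eqref{eq:KSW}. Your explicit note about the reality of $S_k$ and $b$ is a sensible clarification but does not change the argument.
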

\begin{proof}
By taking the complex conjugate of~\eqref{eq:KSV} followed by multiplying it by $J$ from the left-hand side,
 we obtain~\eqref{eq:KSW} using $JA^{\TP}=AJ$ and $A^* = A^{\TP}$.
\qed
\end{proof}

Using Lemmas~\ref{lemma1} and~\ref{lemma2}, 
we can show the orthogonality properties between $W_{k+1}$ and $V_{k+1}$ as follows.
\begin{theorem}
\label{theo:j1}
Let $V_{k+1}=[v_1,\dots,v_{k+1}]$ be a matrix satisfying~\eqref{eq:KSV}, and $W_{k+1}=JV_{k+1}^*$.
If the matrices $V_{k}$ and $W_{k}$ are orthogonal and $A$-orthogonal to each other:
$V_k^{\HC}W_k=\nullmat$ and $V_k^{\HC}AW_k=\nullmat$,
then the matrices $V_{k+1}$ and $W_{k+1}$ also satisfy the following orthogonality relations:
\begin{align}
 V_{k+1}^{\HC}   W_{k+1} & = \nullmat,
\label{eq:VHW}
\\
 V_{k+1}^{\HC} A W_{k+1} & = \nullmat.
\label{eq:VHAW}
\end{align}
\end{theorem}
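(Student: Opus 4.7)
The plan is to partition the matrices block-wise and treat each of the four resulting blocks separately. Writing $V_{k+1}=[\,V_k \mid v_{k+1}\,]$ and $W_{k+1}=[\,W_k \mid w_{k+1}\,]$, the product $V_{k+1}^{\HC}W_{k+1}$ decomposes into $V_k^{\HC}W_k$, $V_k^{\HC}w_{k+1}$, $v_{k+1}^{\HC}W_k$, and $v_{k+1}^{\HC}w_{k+1}$, and $V_{k+1}^{\HC}AW_{k+1}$ decomposes analogously. The hypothesis directly annihilates the $(1,1)$ blocks of both products, and Lemma~\ref{lemma1} applied with $v=v_{k+1}$ (so that $w=w_{k+1}$) annihilates the $(2,2)$ blocks. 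All remaining work is in the off-diagonal blocks.

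For the off-diagonals of $V_{k+1}^{\HC}W_{k+1}$, I would invoke Lemma~\ref{lemma2}, which furnishes the dual relation $AW_k = W_k S_k + w_{k+1}b^{\TP}$. Left-multiplying by $V_k^{\HC}$ and rearranging gives $(V_k^{\HC}w_{k+1})b^{\TP} = V_k^{\HC}AW_k - V_k^{\HC}W_k S_k$, whose right-hand side is $\nullmat$ by the two hypotheses. In the non-breakdown regime $b\neq 0$ this forces $V_k^{\HC}w_{k+1}=0$. The mirror block $v_{k+1}^{\HC}W_k$ follows symmetrically by taking the Hermitian conjugate of~\eqref{eq:KSV} (noting that $S_k$ and $b$ are real) and right-multiplying by $W_k$.

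For the A-orthogonality the only nontrivial blocks are $V_k^{\HC}Aw_{k+1}$ and its mirror, and here there is no direct recurrence for $Aw_{k+1}$. The idea is to bootstrap from what has just been established: using Hermiticity, write $V_k^{\HC}A^2 W_k = (AV_k)^{\HC}(AW_k)$ and expand through both decompositions~\eqref{eq:KSV} and~\eqref{eq:KSW}. The cross-terms all collapse because we now have $V_k^{\HC}W_k=\nullmat$, $V_k^{\HC}w_{k+1}=0$, $v_{k+1}^{\HC}W_k=0$, and $v_{k+1}^{\HC}w_{k+1}=0$; what remains after subtracting $V_k^{\HC}AW_k\,S_k=\nullmat$ is $(V_k^{\HC}Aw_{k+1})b^{\TP}=\nullmat$. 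Non-breakdown then yields $V_k^{\HC}Aw_{k+1}=0$, and the mirror block is obtained by the same maneuver starting from the conjugate of~\eqref{eq:KSV}.

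The main obstacle I anticipate is exactly this last step: unlike the plain orthogonality, the A-orthogonality of the off-diagonal blocks is not handed to us by Lemma~\ref{lemma2} in a single move, because we have no expression for $Aw_{k+1}$ on its own. The squaring trick $V_k^{\HC}A^2W_k = (AV_k)^{\HC}(AW_k)$ is the key device, but it only works once the plain orthogonality \eqref{eq:VHW} has already been proved, so the two conclusions must be carried out in that order. The degenerate case $b=0$ deserves a short remark: it corresponds to an exact invariant subspace for which the recurrence has terminated, so it lies outside the iterative regime targeted by the theorem.
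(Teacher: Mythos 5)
Your proof is correct, and the handling of the plain-orthogonality blocks matches the paper's. For the $A$-orthogonality off-diagonal blocks, however, you take a detour that the paper avoids. You write that ``there is no direct recurrence for $Aw_{k+1}$'' and therefore introduce the squaring device $V_k^{\HC}A^2W_k=(AV_k)^{\HC}(AW_k)$, which also costs you an extra invocation of the non-breakdown condition $b\neq0$. The paper instead exploits Hermiticity to flip the block: $V_k^{\HC}Aw_{k+1}=\bigl(w_{k+1}^{\HC}AV_k\bigr)^{\HC}$, and $w_{k+1}^{\HC}AV_k$ \emph{does} have a recurrence, namely the one from~\eqref{eq:KSV} itself. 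Left-multiplying that decomposition by $w_{k+1}^{\HC}$ gives
\begin{equation*}
w_{k+1}^{\HC}AV_k = w_{k+1}^{\HC}V_k S_k + \bigl(w_{k+1}^{\HC}v_{k+1}\bigr)b^{\TP},
\end{equation*}
and the right-hand side vanishes immediately from the already-established $V_k^{\HC}w_{k+1}=\nullmat$ and Lemma~\ref{lemma1}'s $w_{k+1}^{\HC}v_{k+1}=0$, with no appeal to $b\neq 0$ at this stage. The mirror block $v_{k+1}^{\HC}AW_k$ is obtained symmetrically by multiplying $v_{k+1}^{\HC}$ into~\eqref{eq:KSW}. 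So your argument is sound, but the step you flagged as the ``main obstacle'' is in fact the easy one once you remember you can Hermitian-conjugate the target block; the only place a genuine $b\neq0$ assumption enters is in extracting $W_k^{\HC}v_{k+1}=\nullmat$ and $V_k^{\HC}w_{k+1}=\nullmat$, which you identified correctly and which the paper defers to the ``no breakdown'' hypothesis in Corollary~\ref{coro1}.
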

\begin{proof}
The decomposition \eqref{eq:KSW} follows from Lemma~\ref{lemma2}.
Multiplying $W_k^{\HC}$ and $V_k^{\HC}$ to~\eqref{eq:KSV} and~\eqref{eq:KSW}, respectively, yields
\begin{align}
W_k^{\HC} A V_k &=  W_k^{\HC} V_k S_k + W_k^{\HC} v_{k+1} b^{\TP},\\
V_k^{\HC} A W_k &=  V_k^{\HC} W_k S_k + V_k^{\HC} w_{k+1} b^{\TP}.
\end{align} 
From the premise that $W_k^{\HC} A V_k=V_k^{\HC} A W_k=\nullmat$ and $W_k^{\HC} V_k =V_k^{\HC} W_k=\nullmat$, 
it follows that
\begin{align}
W_k^{\HC} v_{k+1} = \nullmat,
\label{eq:Wkvk1}
\\
V_k^{\HC} w_{k+1} = \nullmat.
\label{eq:Vkwk1}
\end{align} 
Together with Lemma~\ref{lemma1} and the premise, we find 
\begin{align}
  V_{k+1}^{\HC}W_{k+1}& = \nullmat.
\end{align}

Multiplying $w_{k+1}^{\HC}$ and $v_{k+1}^{\HC}$ to~\eqref{eq:KSV} and~\eqref{eq:KSW}, respectively, yields
\begin{align}
w_{k+1}^{\HC} A V_k &=  w_{k+1}^{\HC} V_k S_k + w_{k+1}^{\HC} v_{k+1} b^{\TP},
\label{eq:wk1AVk}
\\
v_{k+1}^{\HC} A W_k &=  v_{k+1}^{\HC} W_k S_k + v_{k+1}^{\HC} w_{k+1} b^{\TP}.
\label{eq:vk1AWk}
\end{align} 
Because of~\eqref{eq:Wkvk1} and~\eqref{eq:Vkwk1} as well as Lemma~\ref{lemma1},
the right-hand sides of~\eqref{eq:wk1AVk} and~\eqref{eq:vk1AWk} vanish.
Thus,
\begin{align}
w_{k+1}^{\HC} A V_k & = \nullmat,\\
v_{k+1}^{\HC} A W_k & = \nullmat.
\end{align} 
Together with Lemma~\ref{lemma1} and the premise, we find 
\begin{align}
  V_{k+1}^{\HC} A W_{k+1}& = \nullmat.
\end{align}
Therefore, $V_{k+1}$ and $W_{k+1}$ are orthogonal and $A$-orthogonal to each other.
\qed
\end{proof}

Using Theorem~\ref{theo:j1} as well as Lemmas~\ref{lemma1} and~\ref{lemma2},
we can show that the Lanczos vectors $V_{m+1}$ generated with~\eqref{eq:LancV}
are orthogonal and $A$-orthogonal to $W_{m+1}=JV_{m+1}^*$.
\begin{corollary}
\label{coro1}
    The $m$-step Lanczos vectors $V_{m+1}$ with $m\ge 1$ 
    generated with a unit vector $v_1$ for a Hermitian $J$-symmetric matrix $A$ satisfy
    \begin{align}
      V_{m+1}^{\HC} W_{m+1} = \nullmat,\quad V_{m+1}^{\HC} A W_{m+1} = \nullmat,
    \end{align}
with
\begin{align}
  W_{m+1} = J V_{m+1}^*,
\end{align}
when no breakdown occurs.
\end{corollary}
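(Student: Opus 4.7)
The plan is to proceed by induction on $m$, using Theorem~\ref{theo:j1} as the inductive step. The key observation is that the Lanczos decomposition~\eqref{eq:LancV} after $k$ steps is a special case of the generalized decomposition~\eqref{eq:KSV} from Lemma~\ref{lemma2}, with $S_k = T_k$ a real symmetric tridiagonal matrix and $b = \beta_k e_k \in \mathbb{R}^k$ (recall that the $\alpha_j$ and $\beta_j$ are real by the Hermiticity of $A$ and the Lanczos recurrence structure, as noted just after the definition of $T_m$). Hence the structural hypotheses of Lemma~\ref{lemma2} and Theorem~\ref{theo:j1} are satisfied at every Lanczos step.

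For the base case $m=1$, I would verify the orthogonality of $V_1 = [v_1]$ and $W_1 = [Jv_1^*]$ directly. The conditions $V_1^{\HC} W_1 = \nullmat$ and $V_1^{\HC} A W_1 = \nullmat$ reduce to $v_1^{\HC}(Jv_1^*) = 0$ and $v_1^{\HC} A (J v_1^*) = 0$, which are precisely the two identities supplied by Lemma~\ref{lemma1}. Applying Theorem~\ref{theo:j1} with $k=1$ to the one-step Lanczos relation $A v_1 = \alpha_1 v_1 + \beta_1 v_2$ then produces $V_2^{\HC} W_2 = \nullmat$ and $V_2^{\HC} A W_2 = \nullmat$.

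For the inductive step, I would assume $V_k^{\HC} W_k = \nullmat$ and $V_k^{\HC} A W_k = \nullmat$ for some $k \ge 1$. Since the $k$-step Lanczos decomposition fits~\eqref{eq:KSV} with real $S_k$ and real $b$, Theorem~\ref{theo:j1} applies verbatim and delivers $V_{k+1}^{\HC} W_{k+1} = \nullmat$ and $V_{k+1}^{\HC} A W_{k+1} = \nullmat$, which closes the induction and gives the corollary for every $m \ge 1$.

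I do not anticipate any serious obstacle here; the corollary is essentially an unpacking of Theorem~\ref{theo:j1} in the tridiagonal setting. The only subtlety worth being careful about is the role of the ``no breakdown'' assumption: it guarantees $\beta_k \neq 0$ for $k \le m$, so that the next Lanczos vector $v_{k+1}$ is well-defined and the decomposition~\eqref{eq:LancV} genuinely matches the hypotheses of Theorem~\ref{theo:j1} at each iteration. With that in hand, nothing more than the induction above is required.
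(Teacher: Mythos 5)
Your proof is correct and follows essentially the same route as the paper: identify the Lanczos decomposition as an instance of~\eqref{eq:KSV} with real $S_k=T_k$ and $b=\beta_k e_k$, seed the induction from Lemma~\ref{lemma1} at $k=1$, and close each step with Theorem~\ref{theo:j1}. Your remark on the ``no breakdown'' hypothesis ensuring $\beta_k\neq 0$ (hence a well-defined $v_{k+1}$) is a useful clarification that the paper leaves implicit.
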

\begin{proof}
Because the Lanczos decomposition \eqref{eq:LancV} is a particular form of~\eqref{eq:KSV} 
with $k\to m$, a real matrix $S_m \to T_m$ and a real vector $b \to \beta_m e_m$, 
Lemma~\ref{lemma2} can be applied to obtain \eqref{eq:LancW}.
Because $w_1^{\HC} v_1=0$ and $w_1^{\HC} A v_1=0$ hold from Lemma~\ref{lemma1},
we can apply Theorem~\ref{theo:j1} to \eqref{eq:LancV} and \eqref{eq:LancW} when $m=1$.
Moreover, the Lanczos decomposition retains its form applicable to Theorem~\ref{theo:j1} for any $m>1$.
Therefore, the corollary follows from Theorem~\ref{theo:j1} and Lemmas~\ref{lemma1} and \ref{lemma2} by induction. 
\qed
\end{proof}

We cannot simultaneously find degenerate pairs of eigenvectors 
with the standard single-vector Lanczos process.
This is true for computation with exact arithmetic.
However, with finite precision arithmetic, 
the single-vector Lanczos process would generate a small overlap to $W_k$ via round-off errors, 
so that even after the convergence of an eigenvector, a late convergence to the other paired eigenvector 
would be possible. 
Because this behavior is rather accidental, a block-type Lanczos algorithm has to be applied 
to accelerate the convergence for degenerate 
eigenvectors~\cite{doi:10.1137/S1064827501397949,doi:10.1137/1.9780898719581,GOLUB1977361,SHIMIZU2019372,Zhou2008}.

We further investigate the structure of the Lanczos decomposition.
According to Corollary~\ref{coro1} and the Lanczos decompositions
\eqref{eq:LancV} and \eqref{eq:LancW}, the block-type decomposition can be constructed as:
\begin{align}
  A V_{[m]} = V_{[m]} T_{[m]} + \mathcal{V}_{m+1} \mathcal{B}_m E_{[m]}^{\TP},
\end{align}
where we define
\begin{align}
  V_{[m]} &\equiv 
  \begin{bmatrix*}[c]
  \mathcal{V}_1 & \mathcal{V}_2 & \dots & \mathcal{V}_{m-1} & \mathcal{V}_m 
  \end{bmatrix*},
\quad
T_{[m]} \equiv
  \begin{bmatrix*}[l]
    \mathcal{A}_1 \pp& \mathcal{B}_1\pp &  \\
    \mathcal{B}_1 \pp& \mathcal{A}_2\pp & \mathcal{B}_2\pp & \\
                  & \ddots        & \ddots            & \ddots \\
                  &               & \mathcal{B}_{m-2} & \mathcal{A}_{m-1} & \mathcal{B}_{m-1} \\
                  &               &                   & \mathcal{B}_{m-1} & \mathcal{A}_m 
  \end{bmatrix*},
\notag\\
\mathcal{V}_j &\equiv
                \begin{bmatrix*}[c]
                    v_j, w_j
                \end{bmatrix*},
\quad
\mathcal{A}_j \equiv \mathrm{diag}(\alpha_j,\alpha_j),\quad
\mathcal{B}_j \equiv \mathrm{diag}(\beta_j ,\beta_j ),\quad
E^{\TP}_{[m]} \equiv 
  \begin{bmatrix*}[c]
      O & O & \dots & O & I
  \end{bmatrix*},
\end{align}
where the size of $E^{\TP}_{[m]}$ is $2\times 2m$.
When $m = n/2$, the decomposition should terminate, 
because $V_{[n/2]}$ completely block-tridiagonalize $A$ and
the Krylov subspaces
$\mathcal{K}_{n/2}(A,v)$ and $\mathcal{K}_{n/2}(A,Jv^*)$ span the entire eigenspace of $A$.
Because $\mathcal{K}_{n/2}(A,v)$ and $\mathcal{K}_{n/2}(A,Jv^*)$ are orthonormal and 
\eqref{eq:LancV} and \eqref{eq:LancW} are independent iterations,
the Lanczos iteration for \eqref{eq:LancV} terminates at $m=n/2$ regardless of the iteration for \eqref{eq:LancW}.
We, therefore, can construct eigenvectors from $\mathcal{K}_{n/2}(A,v)$ without the eigenvalue multiplicity associated with the $J$-symmetry.
In other words, the standard Lanczos iteration with exact precision arithmetic 
is enough to find all the eigenvectors without multiplicity.
However, this is impractical because it requires exact precision arithmetic.
With the finite precision, the orthogonality to $\mathcal{K}_{n/2}(A,Jv^*)$ is not maintained because of round-off errors
and, eventually, eigenvectors, including multiplicity, could be extracted from 
the single-vector Lanczos iteration, as stated previously.

By using Corollary~\ref{coro1} and with the above analysis, we can construct a Lanczos type algorithm in which
the orthogonality to $W_k$ is enforced to search for eigenvectors without 
multiplicity of eigenvalues associated to the $J$-symmetry.
Additionally, the other vectors paired to them can be easily reconstructed. 
However, for a practical numerical algorithm of the Lanczos type iteration, the iteration should 
terminate at a finite step, and a restarting mechanism is required~\cite{etna_vol2_pp1-21,WU1999156,doi:10.1137/S0895479898334605}.
The most useful and simplest but effective restarting method is 
the so-called thick-restart method~\cite{WU1999156,doi:10.1137/S0895479898334605} that
is a specialization of the Krylov--Schur transformation~\cite{Stewart:2001:KAL:587707.587809}
to Hermitian matrices. To involve the thick-restart method to 
the Lanczos algorithm with the $J$-symmetry, we have to prove the orthogonality between $V_k$ and $W_k$ after 
the Krylov--Schur transformation and restarting.
To achieve this, we have the following corollary.

\begin{corollary}
\label{coro2}
Let $V_{m+1}$ and $W_{m+1}$ be the orthonormal matrices containing basis vectors generated with the $m$-step Lanczos process,
\eqref{eq:LancV} and \eqref{eq:LancW},  and
$Z_k$ be an orthonormal matrix in $\mathbb{R}^{k\times k}$.
The Krylov--Schur transformation with $Z_k$ on the Lanczos decomposition
is defined by
\begin{align}
A U_m &=  U_m (Z_m^{-1} T_m Z_m) +  v_{m+1} b_m^{\TP},
\label{eq:LancVkZk} 
\\
A Q_m &=  Q_m (Z_m^{-1} T_m Z_m) +  w_{m+1} b_m^{\TP},
\label{eq:LancWkZk}
\\
U_m       &\equiv V_m Z_m,\\
Q_m       &\equiv W_m Z_m,\\
b_m^{\TP} &\equiv \beta_{m} e_m^{\TP} Z_m.
\end{align} 
Then, the matrices $U_{m+1} = [ U_{m}, v_{m+1} ]$ and  $Q_{m+1} =  [Q_{m}, w_{m+1} ]$ satisfy
the orthogonal relations: $U_{m+1}^{\HC} Q_{m+1}=\nullmat$ and $U_{m+1}^{\HC} A Q_{m+1}=\nullmat$.
\end{corollary}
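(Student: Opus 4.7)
The plan is to reduce the claim directly to Corollary~\ref{coro1} by exploiting the simple change-of-basis relation $U_m = V_m Z_m$ and $Q_m = W_m Z_m$ with $Z_m$ real orthogonal. Corollary~\ref{coro1} already supplies $V_{m+1}^{\HC} W_{m+1} = \nullmat$ and $V_{m+1}^{\HC} A W_{m+1} = \nullmat$; reading these as $2\times 2$ block identities immediately yields the four sub-block equalities
\begin{align}
V_m^{\HC} W_m = \nullmat, \quad V_m^{\HC} w_{m+1} = \nullmat, \quad v_{m+1}^{\HC} W_m = \nullmat, \quad v_{m+1}^{\HC} w_{m+1} = 0,
\end{align}
and the analogous four identities with $A$ inserted in the middle of each inner product.

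The first step is to expand $U_{m+1}^{\HC} Q_{m+1}$ as a $2\times 2$ block matrix with diagonal blocks $U_m^{\HC} Q_m$ and $v_{m+1}^{\HC} w_{m+1}$, and off-diagonal blocks $U_m^{\HC} w_{m+1}$ and $v_{m+1}^{\HC} Q_m$. Substituting the definitions of $U_m$ and $Q_m$ gives $U_m^{\HC} Q_m = Z_m^{\HC} (V_m^{\HC} W_m) Z_m$, $U_m^{\HC} w_{m+1} = Z_m^{\HC}(V_m^{\HC} w_{m+1})$, and $v_{m+1}^{\HC} Q_m = (v_{m+1}^{\HC} W_m) Z_m$. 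Each of these three terms vanishes because of the sub-block identities above, and the remaining scalar entry $v_{m+1}^{\HC} w_{m+1}$ is zero by Lemma~\ref{lemma1} applied to $v = v_{m+1}$. Hence $U_{m+1}^{\HC} Q_{m+1} = \nullmat$.

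Next I would repeat the same $2\times 2$ block expansion for $U_{m+1}^{\HC} A Q_{m+1}$. The three $A$-weighted off-center blocks reduce, in exactly the same way, to expressions of the form $Z_m^{\HC}(V_m^{\HC} A W_m)Z_m$, $Z_m^{\HC}(V_m^{\HC} A w_{m+1})$, and $(v_{m+1}^{\HC} A W_m) Z_m$, all of which are null by Corollary~\ref{coro1}. The remaining scalar $v_{m+1}^{\HC} A w_{m+1}$ vanishes by the second identity of Lemma~\ref{lemma1}, again applied to $v = v_{m+1}$.

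There is no genuine technical obstacle here, since Corollary~\ref{coro1} already carries all of the $J$-symmetry content and the extra vectors $v_{m+1}$, $w_{m+1}$ are untouched by the transformation by $Z_m$. The only care required is bookkeeping: one has to verify that the orthogonality claims stated by Corollary~\ref{coro1} for $V_{m+1}$ and $W_{m+1}$ do contain exactly the four sub-blocks that appear in the block expansion of $U_{m+1}^{\HC} Q_{m+1}$ and $U_{m+1}^{\HC} A Q_{m+1}$, and that $Z_m$ being real orthogonal (so that $Z_m^{\HC}$ is legitimately applied on the left) is compatible with the Hermitian conjugations. Both are immediate, so the corollary follows.
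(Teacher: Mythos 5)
Your proof is correct, and it takes a somewhat more elementary route than the paper's. The paper verifies the hypothesis of Theorem~\ref{theo:j1} for the pair $(U_m, Q_m)$ (noting that the transformed decompositions \eqref{eq:LancVkZk}--\eqref{eq:LancWkZk} are still of the form covered by Lemma~\ref{lemma2}, with $Z_m^{-1}T_mZ_m$ and $b_m$ real since $Z_m$ is real orthogonal) and then invokes that theorem to append the $(m{+}1)$-th column. You instead observe that $U_{m+1} = V_{m+1}\,\mathrm{diag}(Z_m,1)$ and $Q_{m+1} = W_{m+1}\,\mathrm{diag}(Z_m,1)$, so that $U_{m+1}^{\HC}Q_{m+1}$ and $U_{m+1}^{\HC}AQ_{m+1}$ are just congruence-type transforms of the matrices $V_{m+1}^{\HC}W_{m+1}$ and $V_{m+1}^{\HC}AW_{m+1}$ already shown to vanish in Corollary~\ref{coro1}; this sidesteps Theorem~\ref{theo:j1} and even the decompositions \eqref{eq:LancVkZk}--\eqref{eq:LancWkZk} entirely. (Your separate appeal to Lemma~\ref{lemma1} for the corner entry $v_{m+1}^{\HC}w_{m+1}$ is harmless but unnecessary, since that scalar is already the lower-right block of $V_{m+1}^{\HC}W_{m+1}=\nullmat$.) The trade-off is that the paper's phrasing makes visible exactly what is needed so that, after shrinking the decomposition and restarting, the subsequent iteration again lands in a form to which Theorem~\ref{theo:j1} applies — which is the point of the surrounding discussion — whereas your argument proves the stated corollary more directly but does not by itself set up that inductive step.
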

\begin{proof}
Because $U_m$ and $Q_m$ satisfy $U_m^{\HC} Q_m=\nullmat$ and $Q_m^{\HC} A U_m=\nullmat$
and the decompositions \eqref{eq:LancVkZk} and \eqref{eq:LancWkZk} 
are particular forms of the decomposition in Lemma~\ref{lemma2},
the orthogonality and $A$-orthogonality between $U_{m+1}$ and $Q_{m+1}$ simply follow
from Theorem~\ref{theo:j1}.
\qed
\end{proof}

For the thick-restart method, $Z_m$ is chosen to diagonalize $T_m$, and 
the dimension of the decomposition is reduced from $m$ to $k<m$ 
with a selection criterion for vectors $V_k \leftarrow V_m$. 
In the reduction, the last vectors are kept hold as $v_{k+1} \leftarrow v_{m+1}$ and  $w_{k+1} \leftarrow w_{m+1}$
to retain the decomposition form properly.
The orthogonality properties of the new basis $(U_{m+1},Q_{m+1})$ and the reduced basis $(U_{k+1},Q_{k+1})$ 
still hold according to Corollary~\ref{coro2}. 
After restarting, the Lanczos iteration continues to keep the decomposed form applicable to Theorem~\ref{theo:j1}.
Because the structures of the Lanczos and Krylov--Schur decompositions for $W_k$ and $Q_k$
are the same as those for $V_k$ and $U_k$, respectively,
we do not need to explicitly iterate the Lanczos algorithm for $W_k$ and $Q_k$.
Consequently, we can continue the Lanczos thick-restart cycle only in the subspace $\mathcal{K}_k(A,v)$ 
that is orthogonal and $A$-orthogonal to $\mathcal{K}_k(A,Jv^*)$.
We note that according to Corollaries~\ref{coro1} and \ref{coro2},
all the eigenpairs without multiplicity can be obtained with the thick-restart Lanczos algorithm
using exact precision arithmetic.
This is impractical and we must enforce the orthogonality between $\mathcal{K}_k(A,v)$ 
and $\mathcal{K}_k(A,Jv^*)$ for a practical algorithm.

\section{Thick-restart Lanczos algorithm with \texorpdfstring{$J$}{J}-symmetry}
\label{sec:3}

Based on Theorem~\ref{theo:j1} as well as Corollaries~\ref{coro1} and \ref{coro2}, we construct a thick-restart Lanczos 
algorithm for Hermitian $J$-symmetric matrices (TRLAN--JSYM)
which efficiently searches for eigenvectors without the multiplicity of eigenvalues in $\mathcal{K}_k(A,v)$.
Algorithm~\ref{alg:TRLANJSYM} shows the TRLAN--JSYM algorithm.
The Lanczos iteration with the $J$-symmetry is described in Algorithm~\ref{alg:LANCZOSJSYM}. 
We include the invert mode for the small eigenvalues.

The main difference from the standard thick-restart Lanczos algorithm (TRLAN) is
in Algorithm~\ref{alg:LANCZOSJSYM}, where
we simultaneously construct $w_{j}$ using $w_{j}=Jv_j^*$ and
enforce the orthogonality of $V_{j+1}$ to $W_j =J V_j^*$ to avoid the contamination of 
the search space $\mathcal{K}_k(A,v)$ from $\mathcal{K}_k(A,Jv^*)$.

We will compare the efficiency between the TRLAN--JSYM and the standard TRLAN algorithms in Section~\ref{sec:4}.
For the comparison, we estimate the computational cost of the TRLAN--JSYM and the standard TRLAN algorithms as follows.
We count the total number of matrix-vector multiplication $Av_j$ or $A^{-1}v_j$ 
contained in the Lanczos step Algorithm~\ref{alg:LANCZOSJSYM}.
For the invert mode with a large sparse matrix, it could require an iterative linear solver
and a computational cost to obtain $A^{-1}v_j$.
To focus on the computational cost comparison between the TRLAN--JSYM and TRLAN algorithms,
assuming the computation cost of a single inversion is identical between the two algorithms,
we do not count the cost involved in the inversion
and regard the single inversion operation $A^{-1}v_j$ as one matrix-vector multiplication for the invert mode.
We neglect the cost that explicitly computes the true residual at line 28 in Algorithm~\ref{alg:TRLANJSYM}.
For the first outer iteration, the count is $m$, and after restarting, it is $m-k$.
The thickness $k$ for restarting is defined by 
\begin{align}
k=\min(\mathrm{icnv}+\mathrm{mwin},m-1),
\label{eq:dynamicwin}
\end{align}
as shown in line 48 of Algorithm~\ref{alg:TRLANJSYM},
where $\mathrm{icnv}$ is the number of converged eigenvectors and
$\mathrm{mwin}$ is the initial thickness for restarting.
When all desired eigenvectors are obtained at an outer iteration $N_{\mathrm{conv}}$, 
the upper and lower bounds of the total number of matrix-vector multiplication $N_{\mathrm{MV}}$ 
is estimated as
\begin{align}
  m + (m-\mathrm{mwin}-\mathrm{nev})(N_{\mathrm{conv}}-1)
< N_{\mathrm{MV}} 
< m + (m-\mathrm{mwin}             )(N_{\mathrm{conv}}-1),
\label{eq:MVcost}
\end{align}
where $\mathrm{nev}$ is the number of desired eigenvectors without the multiplicity of $J$-symmetry.
The inequality \eqref{eq:MVcost} follows from the fact that $\mathrm{icnv}$ increases monotonically from zero to 
$\mathrm{nev}$ toward the convergence.

The same cost estimate can be derived for the standard TRLAN algorithm. 
The TRLAN algorithm can be obtained by removing $W_m$ from Algorithms~\ref{alg:TRLANJSYM} and \ref{alg:LANCZOSJSYM}.
Therefore, the cost bound for the TRLAN algorithm is identical to \eqref{eq:MVcost}.
However, to find all eigenvectors paired with the $J$-symmetry using the TRLAN algorithm,
$\mathrm{nev}$ for the TRLAN must be double that of the TRLAN--JSYM.
Thus it is natural to double all parameters for the TRLAN algorithm than those of the TRLAN--JSYM.
Therefore, the upper and lower bounds of the total 
number of matrix-vector multiplication $N_{\mathrm{MV}}$ of the TRLAN algorithm is
\begin{align}
  2(m + (m-\mathrm{mwin}-\mathrm{nev})(N'_{\mathrm{conv}}-1))
< N_{\mathrm{MV}} 
< 2(m + (m-\mathrm{mwin}             )(N'_{\mathrm{conv}}-1)),
\label{eq:MVcostN}
\end{align}
where the parameters $(\mathrm{nev},\mathrm{mwin},m)$ are those of the TRLAN--JSYM, and
we introduced $N'_{\mathrm{conv}}$ for the number of outer iterations because 
it could be different from that of the TRLAN--JSYM.

Although the computational cost of the Gram--Schmidt orthonormalization 
of the Lanczos step is minor compared to that of the matrix-vector multiplication,
we briefly discuss the cost here.
As shown in Algorithm~\ref{alg:LANCZOSJSYM} for the TRLAN--JSYM, 
the number of vectors to orthonormalize is $2m$ 
and the cost scales with $O((2m)^2)$.
On the other hand, it scales with $O(m^2)$ to orthonormalize $V_m$ with the standard Lanczos algorithm.
As described above, it is natural to double the parameters for the TRLAN.
The cost of the Lanczos part to orthonormalize $V_{2m}$ then becomes $O((2m)^2)$.
Therefore, the scaling of the cost is the same for both algorithms.

Our naive estimates on the computational cost are \eqref{eq:MVcost} and \eqref{eq:MVcostN},
where we assume that the parameters of the TRLAN is twice as large as those of the TRLAN--JSYM.
Although $N_{\mathrm{conv}}$ and $N'_{\mathrm{conv}}$ depend on 
$(\mathrm{nev},\mathrm{mwin},m)$ and the algorithm itself, 
if $N_{\mathrm{conv}}\simeq N'_{\mathrm{conv}}$ holds, 
the TRLAN--JSYM algorithm has a better performance than the TRLAN algorithm.
We will see whether the condition $N_{\mathrm{conv}}\simeq N'_{\mathrm{conv}}$ holds or not
in the numerical tests on the two types of the matrices in the next section.

So far, we have described the single-vector Lanczos iteration type algorithm to introduce the TRLAN--JSYM.
If the matrix $A$ has a dense cluster of eigenvalues or multiple eigenvalues other than those with the $J$-symmetry, 
we need to incorporate the block type Lanczos iteration in the algorithm for efficiency.
We can extend the proposed algorithm to the blocked version in the same manner as it was conducted
for the standard thick-restart Lanczos algorithm~\cite{SHIMIZU2019372,Zhou2008}.
The study on the block version will be addressed in future studies and we have only shown
the single vector version to demonstrate the idea for simplicity.

\begin{algorithm}[h]
 \caption{The thick-restart Lanczos algorithm for a  Hermitian $J$-symmetric matrix $A$ (TRLAN--JSYM).}
 \label{alg:TRLANJSYM}
\begin{algorithmic}[1]
  \Require{Maximum Krylov subspace dimension size $m$, 
           restart window size $\mathrm{mwin}$, and number of desired eigenpairs $\mathrm{nev}$. }
  \Ensure{Eigenpairs $(x_i, \lambda_i)$ and residual norms $||r_i||=||A x_i-\lambda_i x_i||$ 
           for $i=1,\dots,\mathrm{nev}$ in $V(:,1:\mathrm{nev}), \mathrm{ev}(1:\mathrm{nev}), \mathrm{res}(1:\mathrm{nev})$.}
  \State{$k=0$}
  \State{$v_1 = 1$;  $v_1 = v_1/||v_1||$ \Comment{Initial unit vector}}
  \State{$w_1 = Jv_1^*$                  \Comment{Initial dual vector}}
  \Loop
  \State{\Call{LANCZOS\_JSYM}{{$m,k,V_{m+1},W_{m+1},\bar{T}_{m+1}$}} \Comment{$m-k$-step Lanczos with $J$-symmetry}}
  \State{$T_m = Z_m \Lambda_m Z_m^{\TP}$       \Comment{Compute eigenpairs of $T_m$}}
  \State{Move desired eigenpairs in the top dimensions of $Z_m$ and $\Lambda_m$ by sorting.}
  \State{$V_m := V_m Z_m$                      \Comment{Compute approximate eigenvectors}}
  \State{$T_m=0$\Comment{Compute the Krylov--Schur transformation for $\bar{T}_{m+1}$}}
  \For{$i=1,\dots,m$}
    \State{$t_{i,i} = \lambda_i$}
    \State{$t_{m+1,i}=t_{m+1,m}z_{m,i}$}
  \EndFor
  \If{Normal Mode} \Comment{Compute estimated residuals}
    \For{$i=1,\dots,\mathrm{nev}$} 
      \State{$\mathrm{ev}(i)=\lambda_i$}
      \State{$\mathrm{res\_est}(i)=|t_{m+1,i}|$} 
    \EndFor
  \ElsIf{Invert Mode} \Comment{Compute estimated residuals}
    \State{$c = ||A v_{m+1}||$}
    \For{$i=1,\dots,\mathrm{nev}$} 
      \State{$\mathrm{ev}(i)=1/\lambda_i$}
      \State{$\mathrm{res\_est}(i)= c |t_{m+1,i} \mathrm{ev}(i)|$}
    \EndFor
  \EndIf
  \For{$i=1,\dots,\mathrm{nev}$} 
    \If{$\mathrm{res\_est}(i) < \mathrm{tol}$} \Comment{Check true residuals}
      \State{$\mathrm{res}(i) = ||A v_i - v_i \mathrm{ev}(i)||$}
      \If{$\mathrm{res}(i) < \mathrm{tol}$}
         \State{$\mathrm{is\_convd}(i)=.\mathrm{TRUE}.$}
      \Else
         \State{$\mathrm{is\_convd}(i)=.\mathrm{FALSE}.$}
      \EndIf
    \EndIf
  \EndFor
  \State{Move converged eigenpairs in the top dimensions of $Z,T,V,\mathrm{res},
         \mathrm{res\_est},\mathrm{ev}$ with the key $\mathrm{is\_convd}$ by sorting.}
  \State{$\mathrm{icnv}=0$}
  \For{$i=1,\dots,\mathrm{nev}$}
    \If{$\mathrm{is\_convd}(i)==.\mathrm{TRUE}.$}
      \State{$t_{m+1,i}=0$                    \Comment{Decouple converged subspace}}
      \State{$\mathrm{icnv}=\mathrm{icnv}+1$  \Comment{Count number of converged eigenpairs}}
    \EndIf
  \EndFor
  \If{$\mathrm{icnv}==\mathrm{nev}$}
    \State{\bf{Exit Loop}}
  \EndIf
  \State{\Comment{Shrink Krylov--Schur decomposition to $k+1$ dimension}}
  \State{$k=\mathrm{MIN}(\mathrm{icnv}+\mathrm{mwin},m-1)$}
  \For{$i=1,\dots,k$}
    \State{$t_{k+1,i}=t_{m+1,i}$}
  \EndFor
  \State{$v_{k+1} = v_{m+1}$}
  \State{$W_{k+1} = J V_{k+1}^*$}
  \EndLoop
\end{algorithmic}
\end{algorithm}

\begin{algorithm}[h]
\caption{$m-k$-step Lanczos iteration for a  Hermitian $J$-symmetric matrix $A$.}
\label{alg:LANCZOSJSYM}
\begin{algorithmic}[1]
\Procedure{LANCZOS\_JSYM}{$n,m,k,V_{m+1},W_{m+1},\bar{T}_m$}
\Require{$V_{k+1}, W_{k+1}, \bar{T}_k$}
\Ensure{$V_{m+1}, W_{m+1}, \bar{T}_m$}
\State{$\gamma=\sqrt{2}$ \Comment{Reorthogonalization threshold parameter}}
\For{$j=k+1,\dots,m$}
  \If{Normal Mode}
     \State{$v_{j+1}= A v_j$}
  \ElsIf{Invert Mode}
     \State{$v_{j+1} = A^{-1} v_j$}
  \EndIf
  \State{$t_{j,j} = v_j^{\HC} v_{j+1}$}
  \State{$v_{j+1} := v_{j+1} - v_j t_{j,j}$}
  \State{$b_0 = ||v_{j+1}||$}
  \Loop
    \For{$i=1,\dots,j$} 
      \State{$c = w_i^{\HC} v_{j+1}$}
      \State{$v_{j+1} := v_{j+1} - w_i  c$ \Comment{Orthogonalization to $W_j$}}
      \State{$c = v_i^{\HC} v_{j+1}$}
      \State{$v_{j+1} := v_{j+1} - v_i  c$}
    \EndFor
    \State{$b_1 = ||v_{j+1}||$}
    \If{ $b_1 \gamma > b_0$}
      \State{\textbf{Exit Loop}}
    \EndIf
    \State{$b_0=b_1$}
  \EndLoop
  \State{$v_{j+1} := v_{j+1}/b_0$}
  \State{$t_{j+1,j} = b_0$}
  \State{$w_{j+1} = J v_{j+1}^*$  \Comment{Construct $W_{j+1}$}}
\EndFor
\EndProcedure
\end{algorithmic}
\end{algorithm}

\section{Numerical Test}
\label{sec:4}

In this section, we show two numerical tests to explore the efficiency of 
the TRLAN--JSYM algorithm compared to the TRLAN algorithm for Hermitian $J$-symmetric matrices.
The first test is conducted for randomly generated Hermitian $J$-symmetric matrices satisfying the structure 
\eqref{eq:blockform}.
The second test is applied to a matrix in quantum field theory.
We refer to these two test cases as Case A and Case B, respectively.

We implement both the algorithms, TRLAN--JSYM and TRLAN, with Fortran 2003. 
The numerical tests were performed on a single node of the subsystem A of 
the ITO supercomputer system of Kyushu university~\cite{ITOSYSTEM}.
The code is parallelized using OpenMP and the Intel MKL library and executed with 36 threads.

\subsection{Case A}
\subsubsection{Definition of the Test Matrix (Case A)}

\begin{figure}[t]
  \centering
  \includegraphics[clip,trim=0 0 0 0,scale=\figscale]{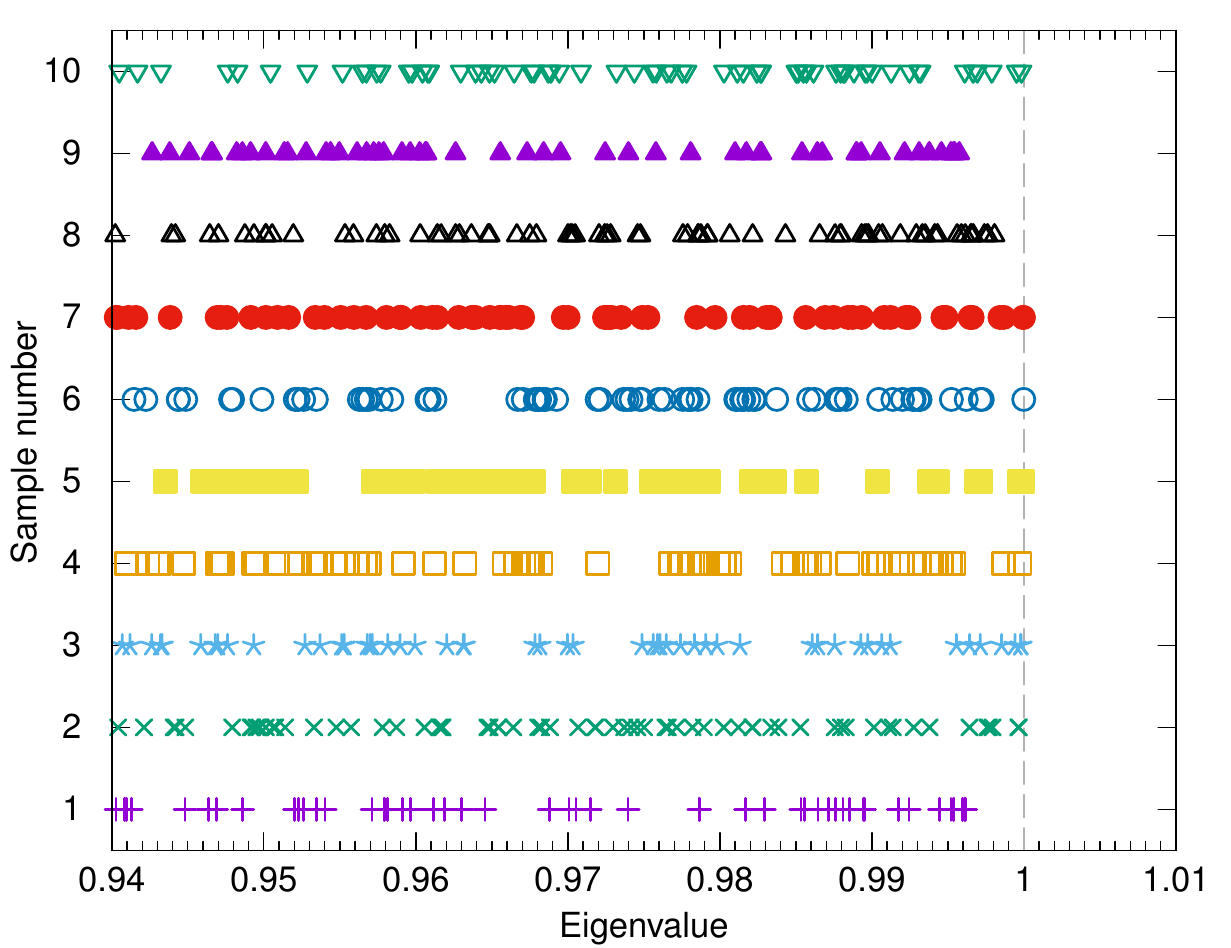}
  \caption{Large eigenvalue distribution of the ten random matrices}
  \label{fig:randmatev}
\end{figure}

We generated ten matrices with a size of $2000\times 2000$. 
These sample matrices are randomly generated, as explained below.
Although these matrices are dense, we employ them 
to explore the proposed algorithm~\ref{alg:TRLANJSYM}.

To randomly generate matrices $A$ with the structure \eqref{eq:blockform},
we employ \eqref{eq:diagblockform}.
The eigenvalues $\Lambda$ are generated from uniformly distributed random real numbers in $\{x\in\mathbb{R}: 0<x<1\}$,
and the elements of matrices $X_1$ and $X_2$ are constructed
from uniformly distributed random complex numbers in $\{z \in \mathbb{C}: -1<\mathrm{Re}(z)< 1, -1 < \mathrm{Im}(z) <1\}$. 
The constraints 
$X_1^{\HC}X_1+X_2^{\HC}X_2=I$ and $X_1^{\TP}X_2-X_2^{\TP}X_1=O$
are then imposed by a Gram--Schmidt algorithm similar to that used in 
Algorithm~\ref{alg:LANCZOSJSYM}. 
Fig.~\ref{fig:randmatev} shows large eigenvalues for the ten random matrices.
We solve the largest several eigenvalues with the TRLAN--JSYM and TRLAN algorithms 
and compare the convergence behavior.

\begin{table}[t]
\centering
    \caption{Algorithm parameters and statistics for convergence (Case A)}
    \label{tab:algparmsA}
    \begin{tabular}{lS[table-format=2.0,table-column-width=2em]
                     S[table-format=2.0,table-column-width=2em]
                     S[table-format=2.0,table-column-width=2em]
                     c
                     c}
\hline\noalign{\smallskip}
      & {$\mathrm{nev}$} & 
        {$\mathrm{mwin}$} &
        {$m$} &
        {$N_{\mathrm{conv}}$}&
        {$N_{\mathrm{MV}}$}
\\
\noalign{\smallskip}\hline\noalign{\smallskip}
TRLAN--JSYM &  5 &  10 &  50 & [  6,  9.1, 14] & [ 248, 369.6, 559]\\
            &    &     & 100 & [  3,  4.1,  6] & [ 280, 377.5, 546]\\
            &    &     & 150 & [  2,  2.8,  4] & [ 290, 400.8, 567]\\
            &    &     & 200 & [  2,  2.1,  3] & [ 389, 408.7, 578]\\
            & 10 &  20 &  50 & [ 10, 12.6, 16] & [ 306, 378.2, 484]\\
            &    &     & 100 & [  4,  4.9,  6] & [ 332, 404.6, 497]\\
            &    &     & 150 & [  3,  3.2,  4] & [ 401, 432.0, 537]\\
            &    &     & 200 & [  2,  2.3,  3] & [ 379, 432.1, 557]\\
\noalign{\smallskip}\hline\noalign{\smallskip}
TRLAN       & 10 &  20 & 100 & [  6,  8.3, 12] & [ 480, 655.6, 935]\\
            &    &     & 200 & [  3,  3.8,  5] & [ 555, 693.7, 904]\\
            &    &     & 300 & [  2,  2.6,  3] & [ 572, 740.3, 856]\\
            &    &     & 400 & [  2,  2.1,  3] & [ 773, 813.1,1150]\\
            & 20 &  40 & 100 & [ 10, 12.6, 15] & [ 569, 699.2, 844]\\
            &    &     & 200 & [  4,  4.6,  6] & [ 652, 746.7, 958]\\
            &    &     & 300 & [  3,  3.1,  4] & [ 794, 828.4,1051]\\
            &    &     & 400 & [  2,  2.1,  3] & [ 748, 787.7,1110]\\
\noalign{\smallskip}\hline
    \end{tabular}
\end{table}

\subsubsection{Numerical Results (Case A)}

Table~\ref{tab:algparmsA} shows the algorithmic parameters used in this test.
For the stopping condition of the algorithms, we employ $\mathrm{tol}=10^{-13}$ for tolerance.
We also tabulate 
the number of outer iteration counts $N_{\mathrm{conv}}$ and matrix-vector multiplications $N_{\mathrm{MV}}$
for convergence in the table. 
The minimal, average, and maximal values from the ten samples are shown in square brackets, respectively.

Figs.~\ref{fig:CAs1AevconvALL} and \ref{fig:CAs1AerrALL} are the convergence 
histories of the eigenvalues and corresponding residuals
for the 1st random matrix, respectively.
The left panels show the result with the TRLAN--JSYM algorithm 
with $(\mathrm{nev},\mathrm{mwin},m)=(10,20,50)$,
and the right panels show the result with the TRLAN algorithm with $(20,40,100)$.
We employ this doubled parameter for the TRLAN, as discussed in the previous section.
The behavior of the TRLAN--JSYM is smooth, while it reorders several times for the TRLAN, 
even though the TRLAN algorithm successfully captures all the eigenvalues with multiplicity two.
The sorting algorithm for the eigenvalues 
and the property of the Lanczos iteration to the $J$-symmetry
cause the reordering of the eigenvalues for the TRLAN.
As mentioned in Section~\ref{sec:2},
the TRLAN tends to evaluate one eigenvector of a pair of the doubly degenerate eigenvalues 
in the early stage of the iterations.
Because of the finite precision arithmetic, it loses complete orthogonality to the other half of the degenerate eigenspace during the iterations,
so that the other eigenvalue paired to the converged eigenvalue emerges in the later stage.
Similar behaviors are also observed for other random matrices at the same algorithmic parameter.
For the cases with a larger $m$,
we do not observe the eigenvalue reordering with the TRLAN algorithm, because they quickly converge.
Increasing $m$, $N_{\mathrm{conv}}$ rapidly decreases, as shown in Table~\ref{tab:algparmsA}.
However, $N_{\mathrm{MV}}$ is almost constant or slightly increasing.

\begin{figure}[t]
    \centering
    \begin{minipage}[b]{\figwidth\linewidth}
    \centering
    \includegraphics[clip,trim=0 0 0 0,scale=\figscale]{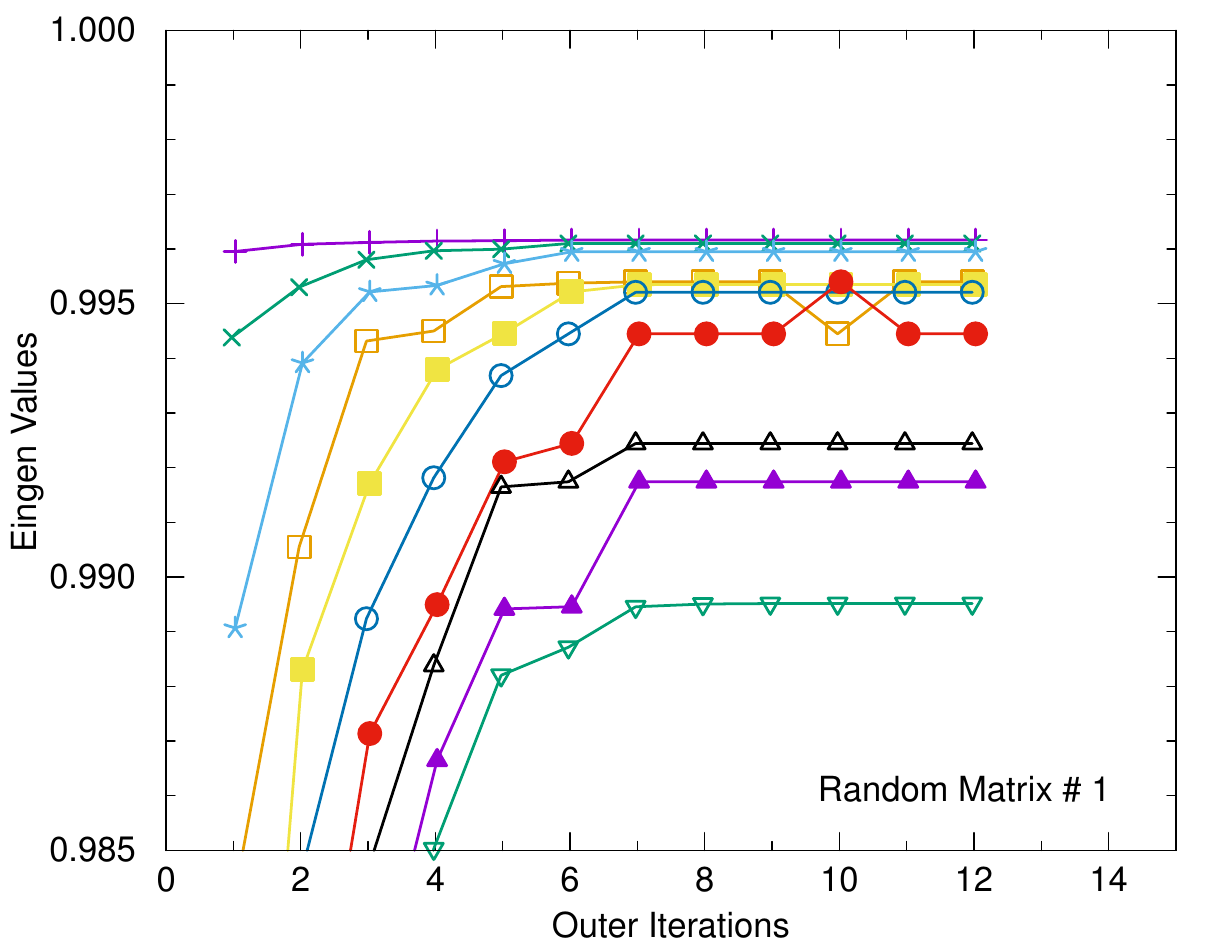}
    \subcaption{TRLAN--JSYM}
    \label{fig:CAs1AevconvTRLANJSYM}
    \end{minipage}
    \begin{minipage}[b]{\figwidth\linewidth}
    \centering
    \includegraphics[clip,trim=0 0 0 0,scale=\figscale]{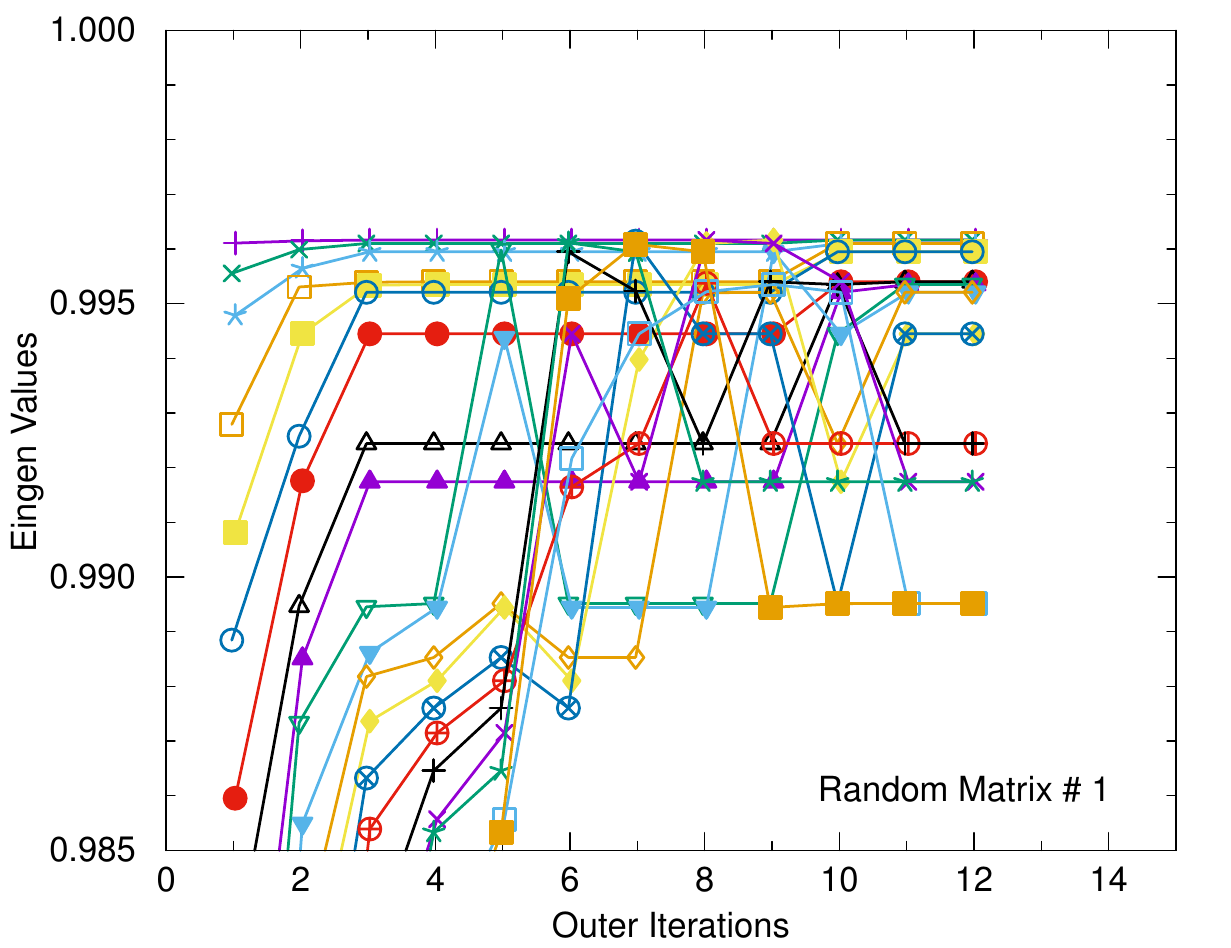}
    \subcaption{TRLAN}
    \label{fig:CAs1AevconvTRLAN}
    \end{minipage}
    \caption{Convergence behavior of the large eigenvalues from the random matrix \# 1 (Case A)}
    \label{fig:CAs1AevconvALL}
\end{figure}

\begin{figure}[t]
    \centering
    \begin{minipage}[b]{\figwidth\linewidth}
    \centering
    \includegraphics[clip,trim=0 0 0 0,scale=\figscale]{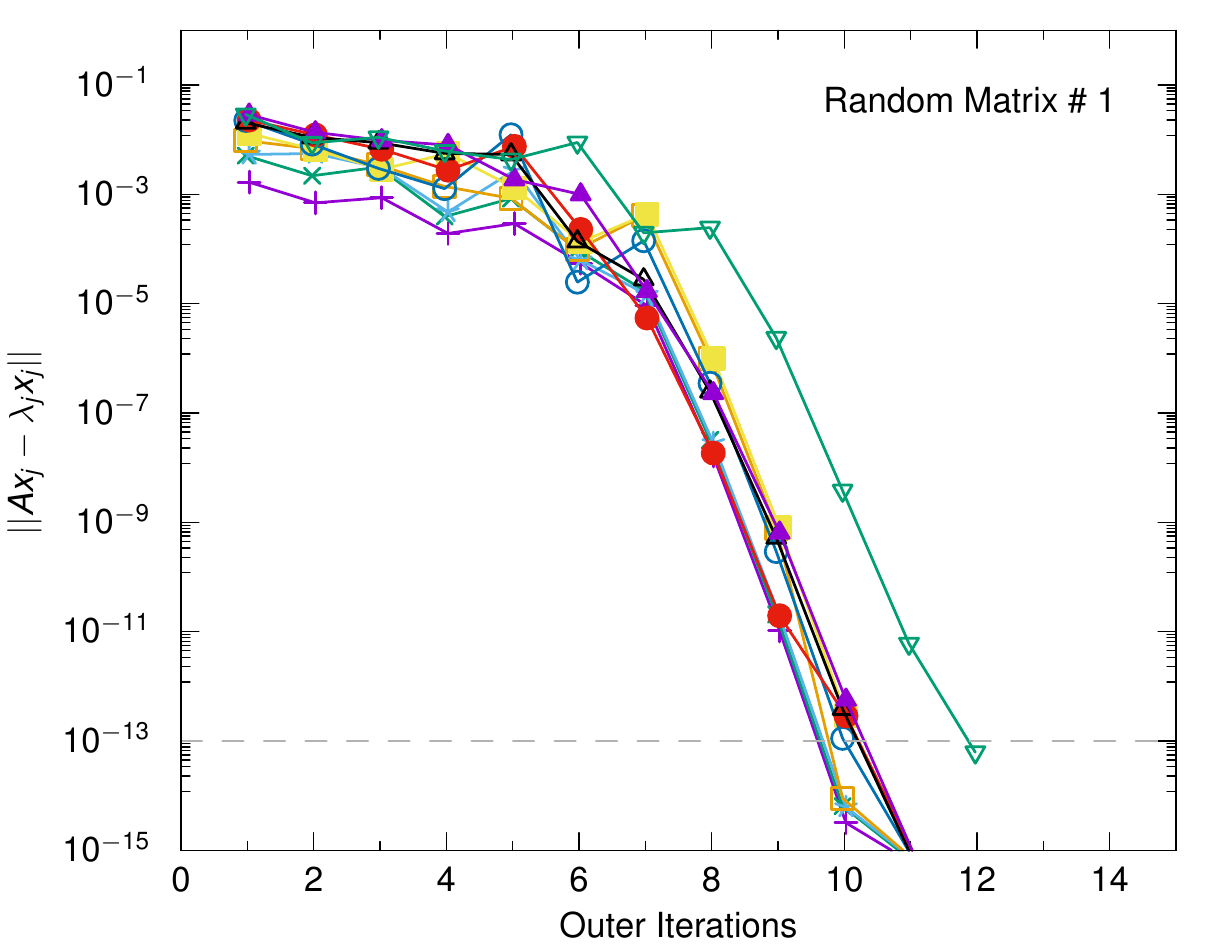}
    \subcaption{TRLAN--JSYM}
    \label{fig:CAs1AerrTRLANJSYM}
    \end{minipage}
    \begin{minipage}[b]{\figwidth\linewidth}
    \centering
    \includegraphics[clip,trim=0 0 0 0,scale=\figscale]{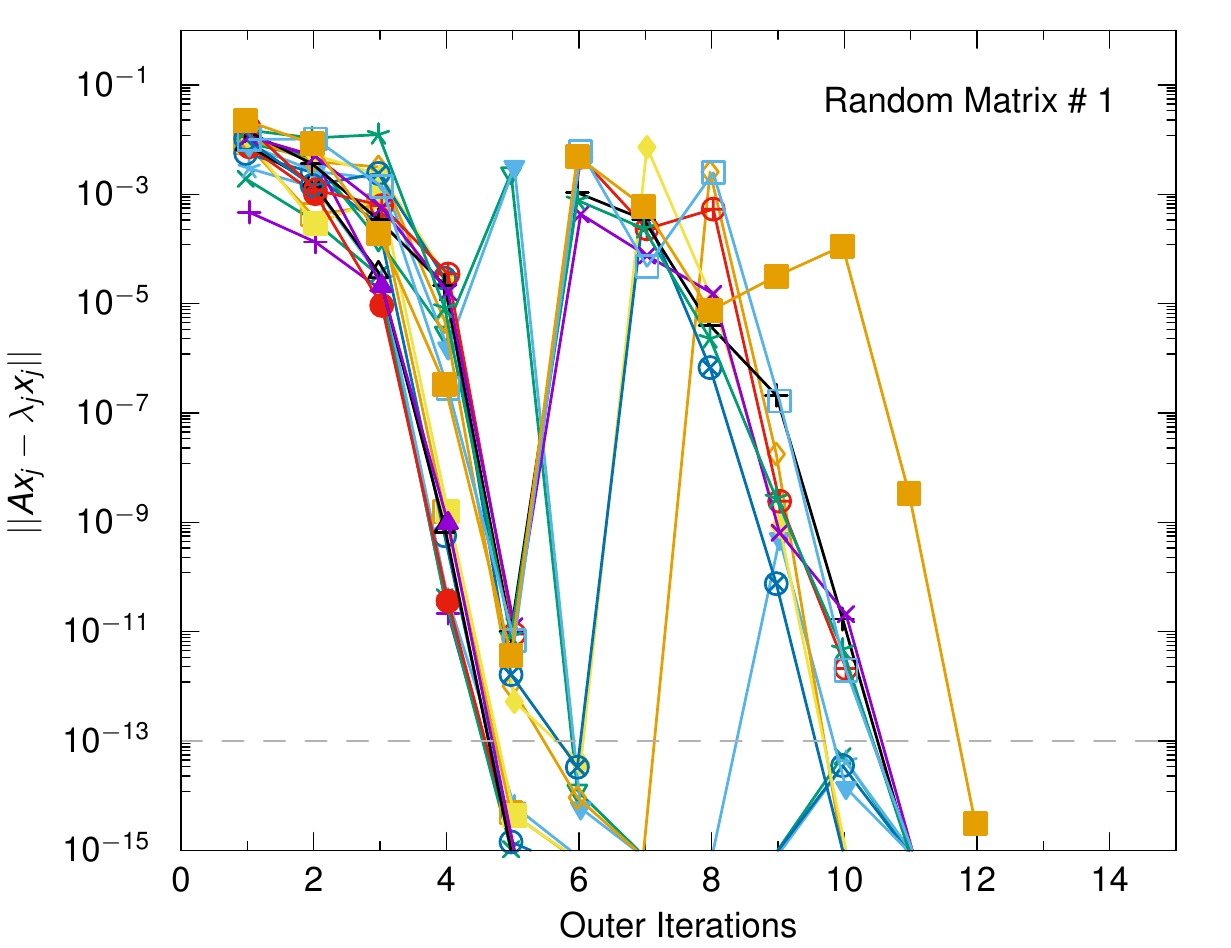}
    \subcaption{TRLAN}
    \label{fig:CAs1AerrTRLAN}
    \end{minipage}
    \caption{Residual history for the large eigenvalues from the random matrix \# 1 (Case A)}
    \label{fig:CAs1AerrALL}
\end{figure}

We compare the computational cost of the algorithms according to the discussion done in the previous section.
The natural parameter choice for the TRLAN algorithm is to employ numbers twice as large as those of the TRLAN--JSYM algorithm. 
The $N_{\mathrm{conv}}$ are comparable among algorithms paired with the doubled parameters, revealing that
$N_{\mathrm{conv}}\simeq N'_{\mathrm{conv}}$ holds for \eqref{eq:MVcost}
and \eqref{eq:MVcostN}. The number of matrix-vector multiplications of
the TRLAN--JSYM algorithm is smaller by roughly a factor of two than that
of the TRLAN algorithm, as seen in the table. Even with the same maximum
Krylov dimension $m$ for both algorithms, \textit{e.g.}
the TRLAN--JSYM with$(\mathrm{nev},\mathrm{mwin},m)=(5,10,100)$ and the TRLAN with $(10,20,100)$, 
the TRLAN--JSYM algorithm still beats the TRLAN algorithm because $m$
does not drastically change the number of matrix-vector multiplications.

\subsection{Case B}

\subsubsection{Definition of the Test Matrix (Case B)}
We evaluate the proposed algorithm~\ref{alg:TRLANJSYM}
for a matrix that appears in a quantum field theory 
called the twisted Eguchi-Kawai (TEK) model 
with adjoint fermions~\cite{GonzalezArroyo:1982ub,GonzalezArroyo:1982hz,Gonzalez-Arroyo:2013bta}.
The equation of motion for the adjoint fermions follows from a matrix $D$
called the Wilson--Dirac operator in the physics literature. The matrix $D=(D_{i,j})$ is defined by
\begin{align}
D_{i,j} &=  \delta_{\alpha,\beta} \delta_{a,b} - 
\kappa \sum_{\mu=1}^{4}
\left[ 
 \left(\delta_{\alpha,\beta} - (\gamma_{\mu})_{\alpha,\beta}\right) (V_{\mu})_{a,b}
+\left(\delta_{\alpha,\beta} + (\gamma_{\mu})_{\alpha,\beta}\right) (V_{\mu})_{b,a} \right],
\label{eq:TEKWDmatrix}
\end{align}
where 
$i$ and $j$ are collective indices of $i=(a,\alpha)$ and $j =(b,\beta)$, respectively.
$V_\mu$ are $(N^2-1)\times (N^2-1)$ matrices 
satisfying $V_{\mu}^{\TP}=V_{\mu}^{\HC}$, \textit{i.e.} real orthonormal matrices
in the adjoint representation of the SU($N$) group,
and $a,b$ denote the group indices running in $1,\dots,N^2-1$. 
$\gamma_\mu$ denotes $4\times 4$ Hermitian matrices satisfying the anti-commuting relation 
$\{\gamma_\mu,\gamma_\nu\} = 2 \delta_{\mu,\nu}I$, and
$\alpha,\beta$ denote spin indices running from $1$ to $4$.
An explicit form of $\gamma_\mu$ can be seen in~\cite{doi:10.1142/8229}.
The parameter $\kappa$ implicitly determines the mass of the fermion.
For more details of $D$, we refer to~\cite{Gonzalez-Arroyo:2013bta,Montvay:2001aj}.

The matrix $D$ satisfies the following properties:
\begin{align}
  \gamma_5 D \gamma_5 &= D^{\HC},
\label{eq:G5Hermit}
\\
         C D C^{\TP}  &= D^{\TP},
\label{eq:MayoranaProp}
\end{align}
where $\gamma_5 = \gamma_4\gamma_1\gamma_2\gamma_3$ and $C=\gamma_4 \gamma_2$.
The matrices $\gamma_5$ and $C$ act only on the spin indices in this notation. 
We employ the definition for $\gamma_\mu$ from~\cite{doi:10.1142/8229}
and give the explicit form in Appendix~\ref{sec:appendix}.
In this case, $\gamma_5$ is real and symmetric and $C$ is real and skew symmetric $C^{\TP}=-C$.
Monte Carlo methods have been used for simulating quantum field theories.
For the system considered herein, the quantum field $V_\mu$ corresponds 
to the stochastic variable in a Monte Carlo algorithm.
The spectrum of $D$ becomes stochastic because it depends on $V_\mu$.

We test the proposed algorithm for the matrix $A$ defined by
\begin{align}
  A \equiv (D \gamma_5)^2 = D D^{\HC}.
\label{eq:TEKmatrix}
\end{align}
The matrix $A$ is Hermitian and $J$-symmetric with $J = C\gamma_5$.
The distribution of the small eigenvalues of $A$ is physically important because
it carries the information about the dynamics of the theory.
The details of the algebraic property of $D$ and $A$ are given in Appendix \ref{sec:appendix}.

\subsubsection{Numerical Results (Case B)}

We set $N=289$ of SU($N$) for the test. The dimension of $A$ is $4\times(289^2-1)=334080$.  
The ensemble for $V_\mu$ is generated with a Monte Carlo algorithm at a parameter set of the TEK model.
We employ a single Monte Carlo sample of $V_\mu$ for the test.

\setlength{\tabcolsep}{4.5pt}
\begin{table}[t]
\centering
    \caption{Algorithm parameters and statistics for convergence (Case B)}
    \label{tab:algparmsB}
    \begin{tabular}{lS[table-format=2.0,table-column-width=2em]
                     S[table-format=2.0,table-column-width=2em]
                     S[table-format=2.0,table-column-width=2em]
                     S[table-format=3.0,table-column-width=3em]
                     S[table-format=3.0,table-column-width=3em]
                     S[table-format=4.1,table-column-width=3em]
                     S[table-format=3.0,table-column-width=3em]
                     S[table-format=3.0,table-column-width=3em]
                     S[table-format=4.1,table-column-width=3em]}
\hline\noalign{\smallskip}
      & & & &
\multicolumn{3}{c}{Large eigenvalues} &
\multicolumn{3}{c}{Small eigenvalues} \\
\cline{5-10}\noalign{\smallskip}
      & {$\mathrm{nev}$} &
        {$\mathrm{mwin}$} & 
        {$m$} & 
        {$N_{\mathrm{conv}}$} &
        {$N_{\mathrm{MV}}$} &
        {\begin{tabular}{wl{3em}}
           Time\\{[sec]}
        \end{tabular}} &
        {$N_{\mathrm{conv}}$} &
        {$N_{\mathrm{MV}}$} &
        {\begin{tabular}{wl{3em}}
           Time\\{[sec]}
        \end{tabular}}
\\
\noalign{\smallskip}\hline\noalign{\smallskip}
TRLAN--JSYM &  4 &  8 &  24 &  54 &  817 & 103.1 &  11 & 178 &  403.2 \\
            &    &    &  48 &  17 &  673 & 133.2 &   5 & 204 &  485.4 \\
            &  8 & 16 &  24 & 711 & 1576 & 334.8 &  46 & 219 &  477.1 \\
            &    &    &  48 &  25 &  770 & 173.6 &   7 & 227 &  540.3 \\
            & 16 & 32 &  48 & 145 & 1128 & 352.8 &  41 & 271 &  648.7 \\
            &    &    &  96 &  16 &  986 & 403.5 &   4 & 276 &  706.9 \\
\noalign{\smallskip}\hline\noalign{\smallskip}
TRLAN       &  8 & 16 &  48 &  42 & 1215 & 154.2 &  11 & 335 &  763.6 \\
            &    &    &  96 &  15 & 1170 & 228.7 &   4 & 327 &  779.3 \\
            & 16 & 32 &  48 & 628 & 2087 & 512.5 & 104 & 441 &  940.5 \\
            &    &    &  96 &  24 & 1408 & 316.0 &   6 & 383 &  915.3 \\
            & 32 & 64 &  96 & 337 & 2332 & 866.5 &  90 & 525 & 1281.4 \\
            &    &    & 192 &  16 & 1881 & 782.3 &   4 & 528 & 1359.7 \\
\noalign{\smallskip}\hline
    \end{tabular}
\end{table}

We compare the convergence behavior of the eigenvalues between the proposed algorithm (TRLAN--JSYM) 
and the standard (single vector) thick-restart Lanczos algorithm (TRLAN).
We use the normal and invert modes for solving large and small eigenvalues, respectively.
The conjugate--gradient (CG) algorithm is used in the invert mode.
The algorithmic parameters, the number of desired eigenvalues $\mathrm{nev}$, 
the restart window size $\mathrm{mwin}$,
and the maximum size of the search dimension $m$ are shown in Table~\ref{tab:algparmsB}.
We also tabulate the results of the outer iteration count, the number of matrix-vector multiplications, and
the computational time for the convergence.
The timings are shown as reference values, showing how the cost of the matrix-vector multiplication dominates 
the computational time in actual applications.
We note that the convergence behavior of the CG in the invert mode
was almost identical between the two algorithms as has been assumed in Section~\ref{sec:3},
justifying the cost comparison in terms of the number of matrix-vector multiplications of $A^{-1}v_j$ in the invert mode.
Compared with the TRLAN--JSYM,
we double the parameters of the TRLAN to 
find all doubly degenerate eigenvalues.
We set the tolerance to be $10^{-13}$ for the eigensolvers.

Figs.~\ref{fig:AevconvALL} and~\ref{fig:AerrALL} 
show the convergence behavior and residual history of the large eigenvalues, respectively.
The algorithmic parameters are $(\mathrm{nev},\mathrm{mwin},m)=(8,16,48)$
for the TRLAN--JSYM and $(16,32,96)$ for the TRLAN, respectively.
We observe similar convergence behavior as in Case A,
where several reorderings occur among approximate eigenvalues during the iterations in the TRLAN algorithm.
The same convergence behavior is seen 
in Figs.~\ref{fig:invAevconvALL} and \ref{fig:invAerrALL} for the small eigenvalues.

\begin{figure}[t]
    \centering
    \begin{minipage}[b]{\figwidth\linewidth}
    \centering
    \includegraphics[clip,trim=0 0 0 0,scale=\figscale]{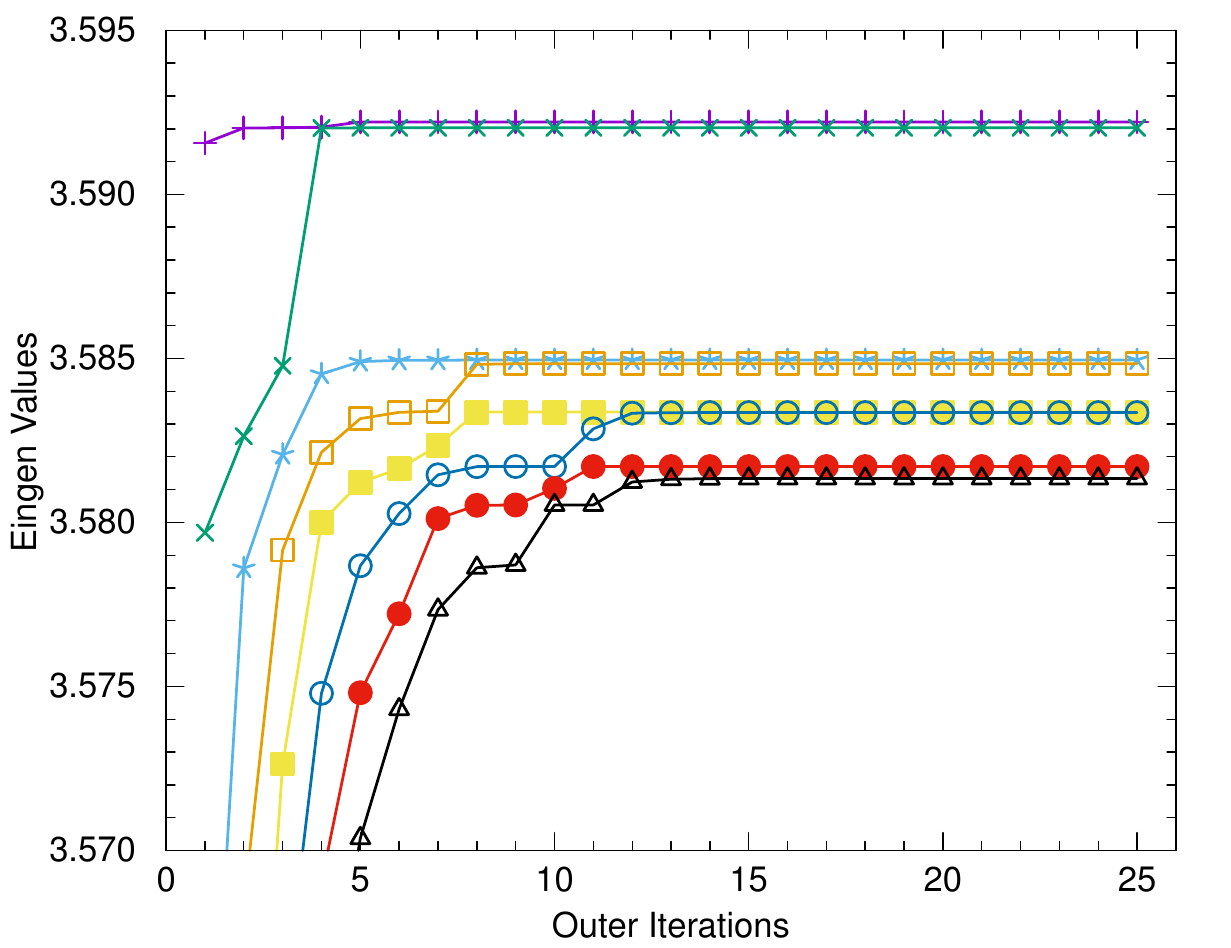}        
    \subcaption{TRLAN--JSYM}
    \label{fig:AevconvTRLANJSYM}
    \end{minipage}
    \begin{minipage}[b]{\figwidth\linewidth}
    \centering
    \includegraphics[clip,trim=0 0 0 0,scale=\figscale]{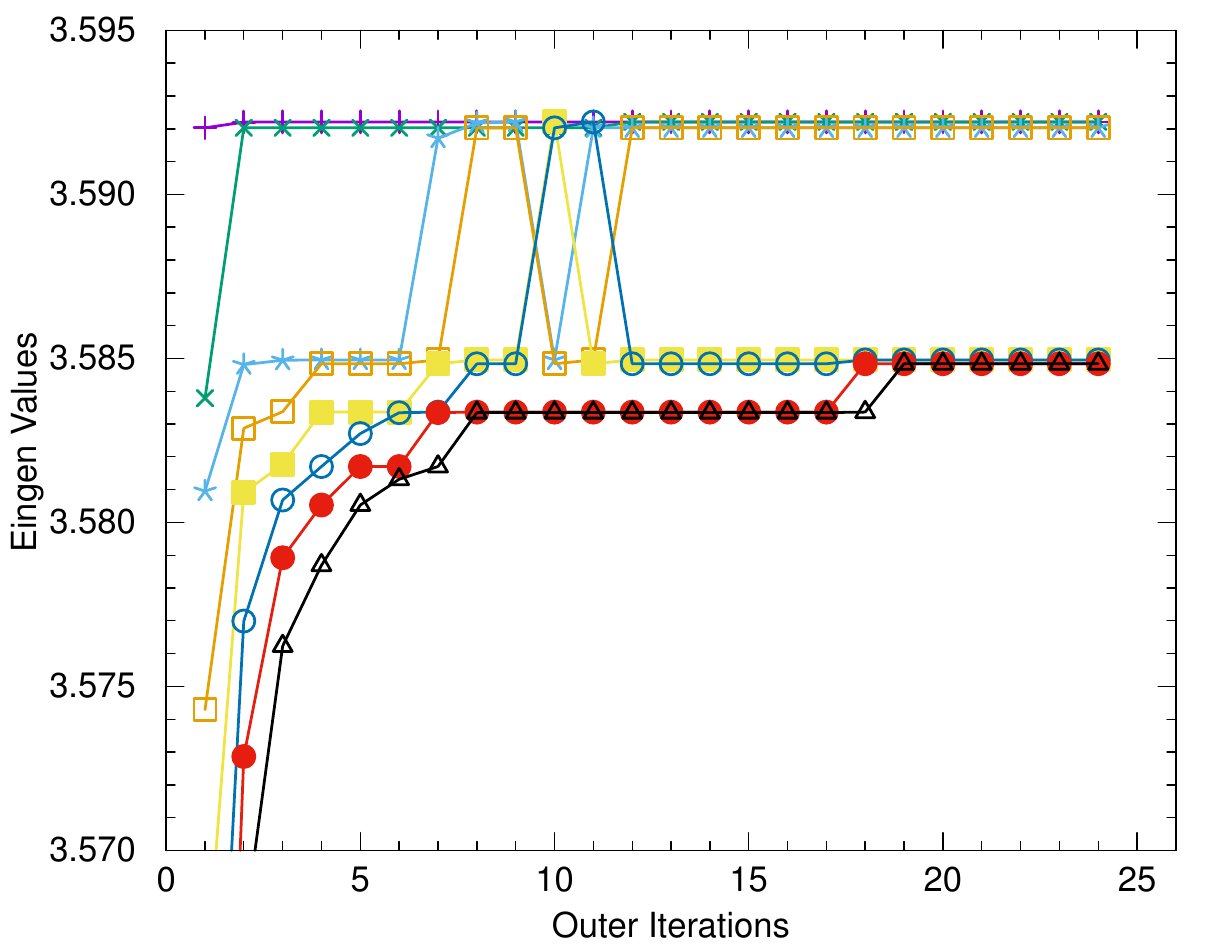}
    \subcaption{TRLAN}
    \label{fig:AevconvTRLAN}
    \end{minipage}
    \caption{Convergence behavior of the large eigenvalues (Case B)}
    \label{fig:AevconvALL}
\end{figure}
\begin{figure}[t]
    \centering
    \begin{minipage}[b]{\figwidth\linewidth}
    \centering
    \includegraphics[clip,trim=0 0 0 0,scale=\figscale]{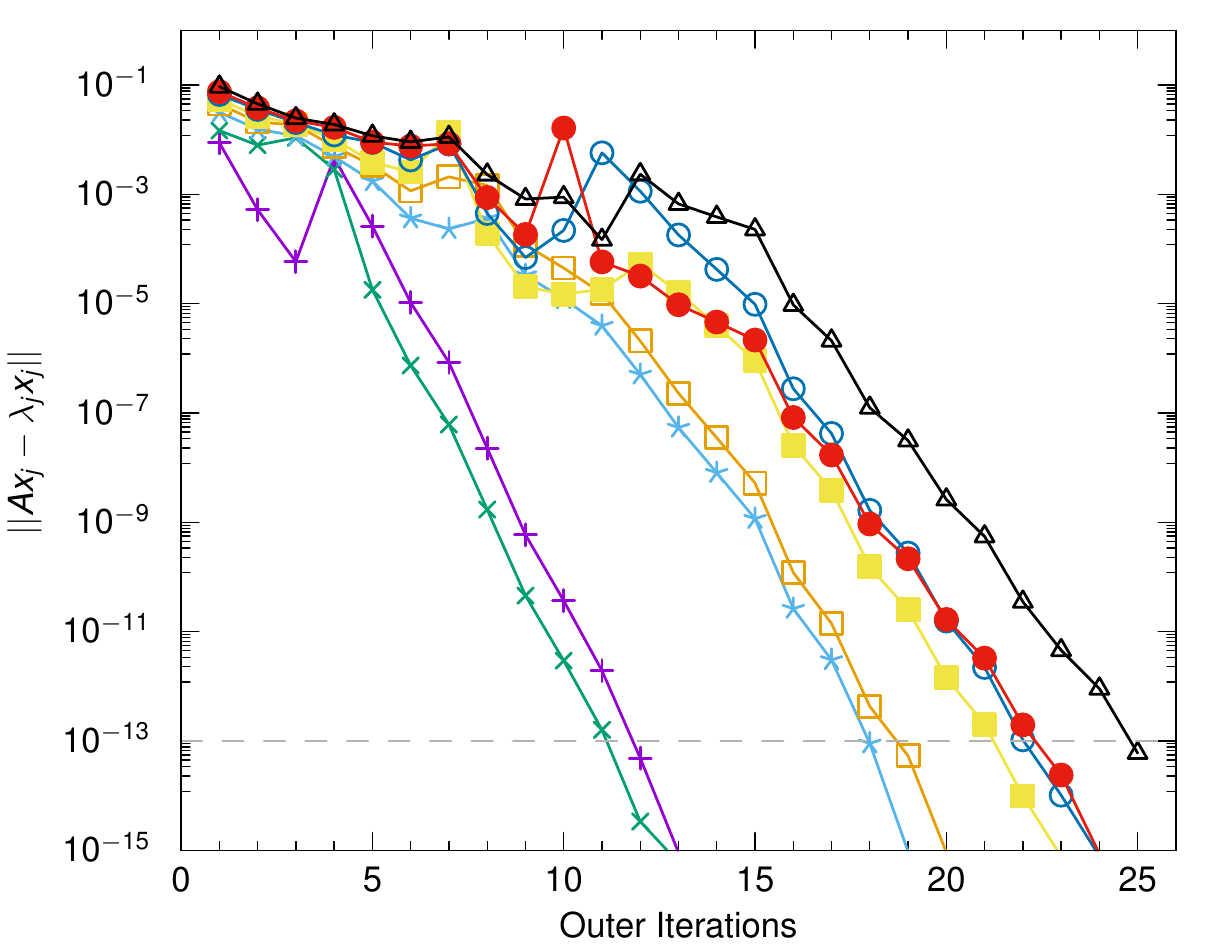}        
    \subcaption{TRLAN--JSYM}
    \label{fig:AerrTRLANJSYM}
    \end{minipage}
    \begin{minipage}[b]{\figwidth\linewidth}
    \centering
    \includegraphics[clip,trim=0 0 0 0,scale=\figscale]{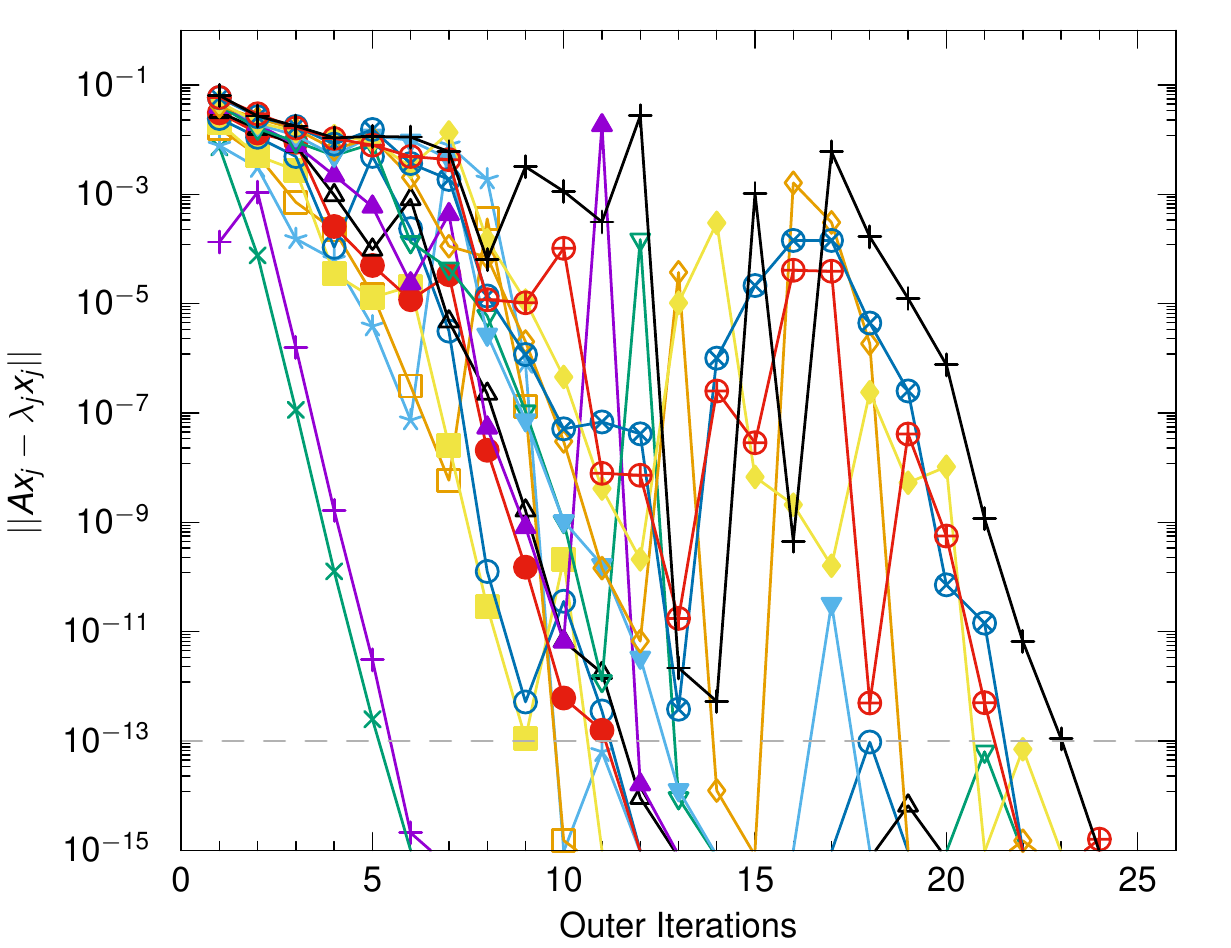}
    \subcaption{TRLAN}
    \label{fig:AerrTRLAN}
    \end{minipage}
    \caption{Residual history for the large eigenvalues (Case B)}
    \label{fig:AerrALL}
\end{figure}

The computational costs are compared in Table~\ref{tab:algparmsB}.
According to the discussion done in Section~\ref{sec:3},
most cases satisfy  $N_{\mathrm{conv}}\simeq N'_{\mathrm{conv}}$ for \eqref{eq:MVcost} and \eqref{eq:MVcostN}
among algorithms paired with doubled parameters,
and $N_{\mathrm{MV}}$ for the TRLAN--JSYM algorithm is approximately twice as small as that for the TRLAN algorithm.
Three cases, $(\mathrm{nev},\mathrm{mwin},m)=(16,32,48)$ for both modes, and $(\mathrm{nev},\mathrm{mwin},m)=(8,16,24)$ 
for the invert mode, are the exceptions. 
In these cases, one or two eigenvalues, 
which are the largest for the invert mode or the smallest for the normal mode among $\mathrm{nev}$ eigenvalues,
show slow convergence.
Even for these cases, however, the TRLAN--JSYM shows smoother convergence behavior than that of the TRLAN.
All the cases we have investigated show that the TRLAN--JSYM algorithm 
has better computational cost than that of TRLAN regarding $N_{\mathrm{MV}}$.
The computational timings are roughly proportional to $N_{\mathrm{MV}}$,
indicating that the matrix-vector multiplication dominates the timings.
With the invert mode for small eigenvalue problems, 
the timings are well proportional to the number of matrix-vector multiplications,
because the CG algorithm is used for $A^{-1}v$ and the cost of the Lanczos and
the true residual computing parts become negligible.

\begin{figure}[t]
    \centering
    \begin{minipage}[b]{\figwidth\linewidth}
    \centering
    \includegraphics[clip,trim=0 0 0 0,scale=\figscale]{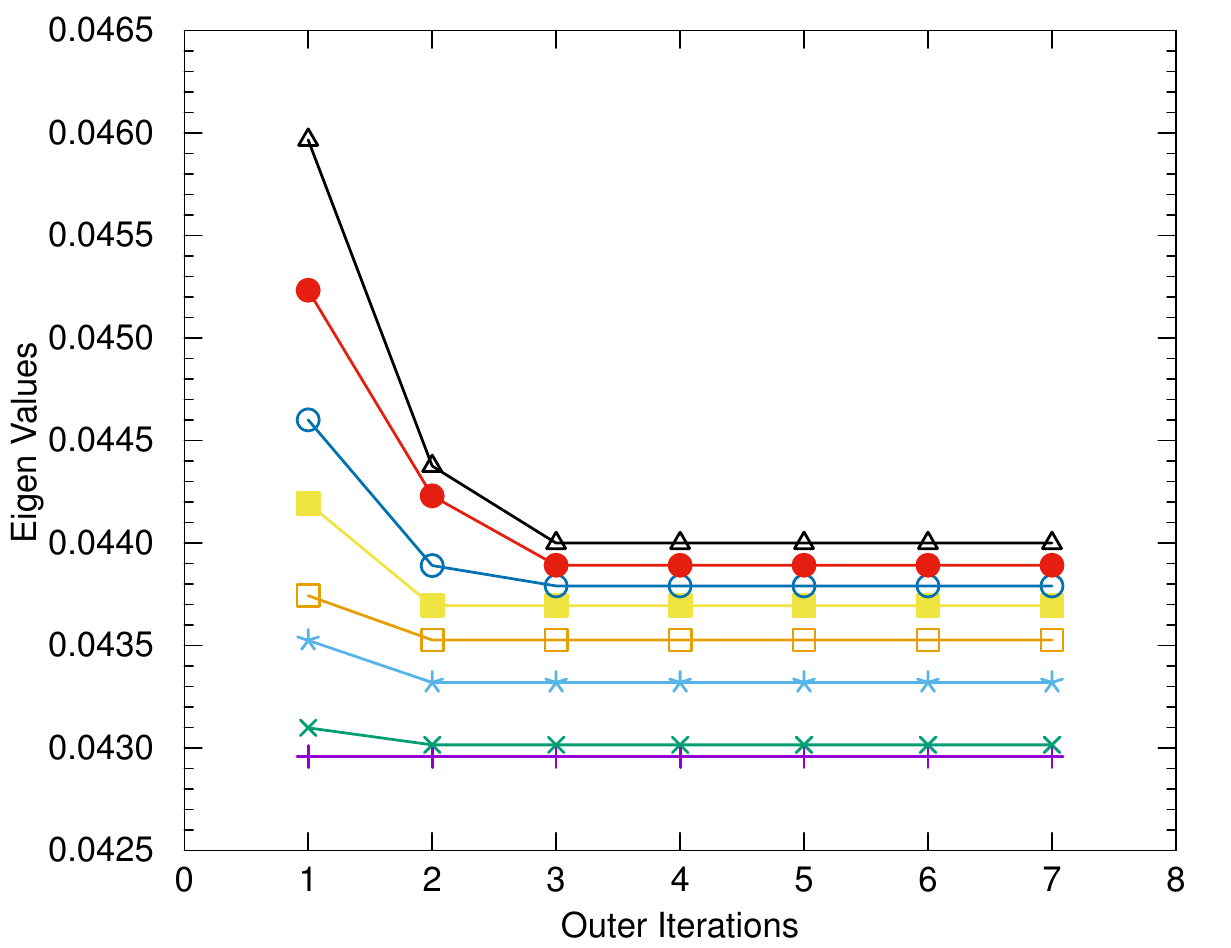}
    \subcaption{TRLAN--JSYM}
    \label{fig:invAevconvTRLANJSYM}
    \end{minipage}
    \begin{minipage}[b]{\figwidth\linewidth}
    \centering
    \includegraphics[clip,trim=0 0 0 0,scale=\figscale]{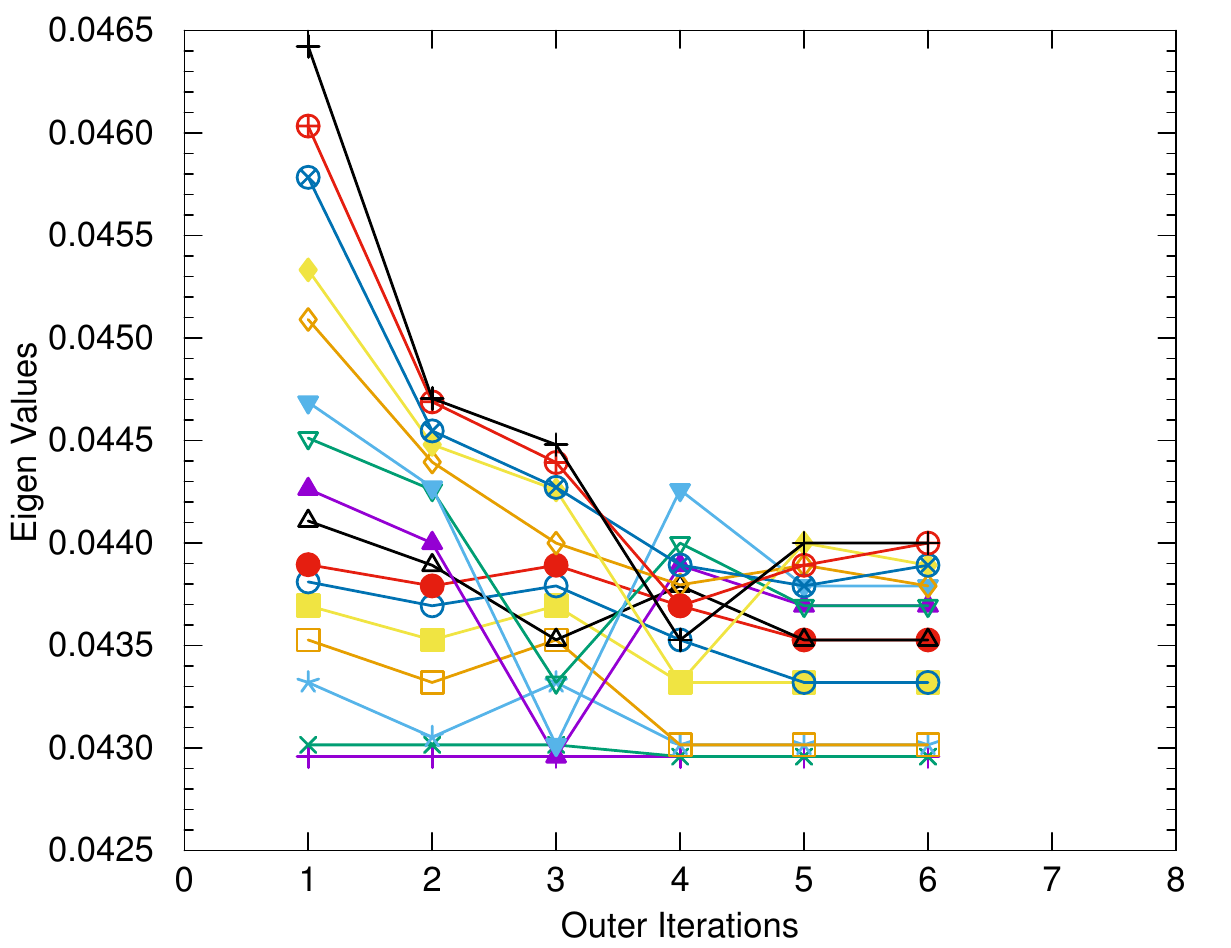}
    \subcaption{TRLAN}
    \label{fig:invAevconvTRLAN}
    \end{minipage}
    \caption{Convergence behavior of the small eigenvalues (Case B)}
    \label{fig:invAevconvALL}
\end{figure}
\begin{figure}[t]
    \centering
    \begin{minipage}[b]{\figwidth\linewidth}
    \centering
    \includegraphics[clip,trim=0 0 0 0,scale=\figscale]{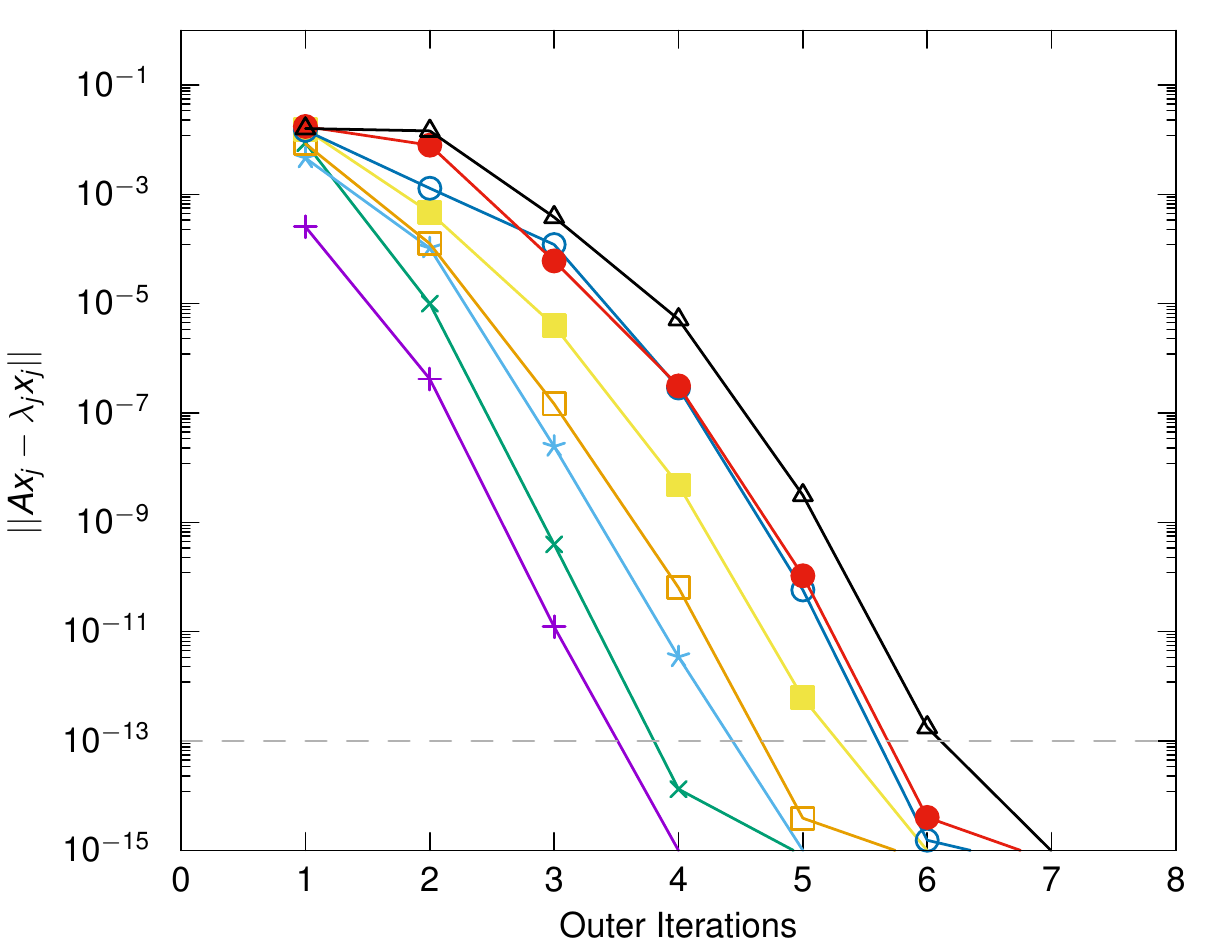}
    \subcaption{TRLAN--JSYM}
    \label{fig:invAerrTRLANJSYM}
    \end{minipage}
    \begin{minipage}[b]{\figwidth\linewidth}
    \centering
    \includegraphics[clip,trim=0 0 0 0,scale=\figscale]{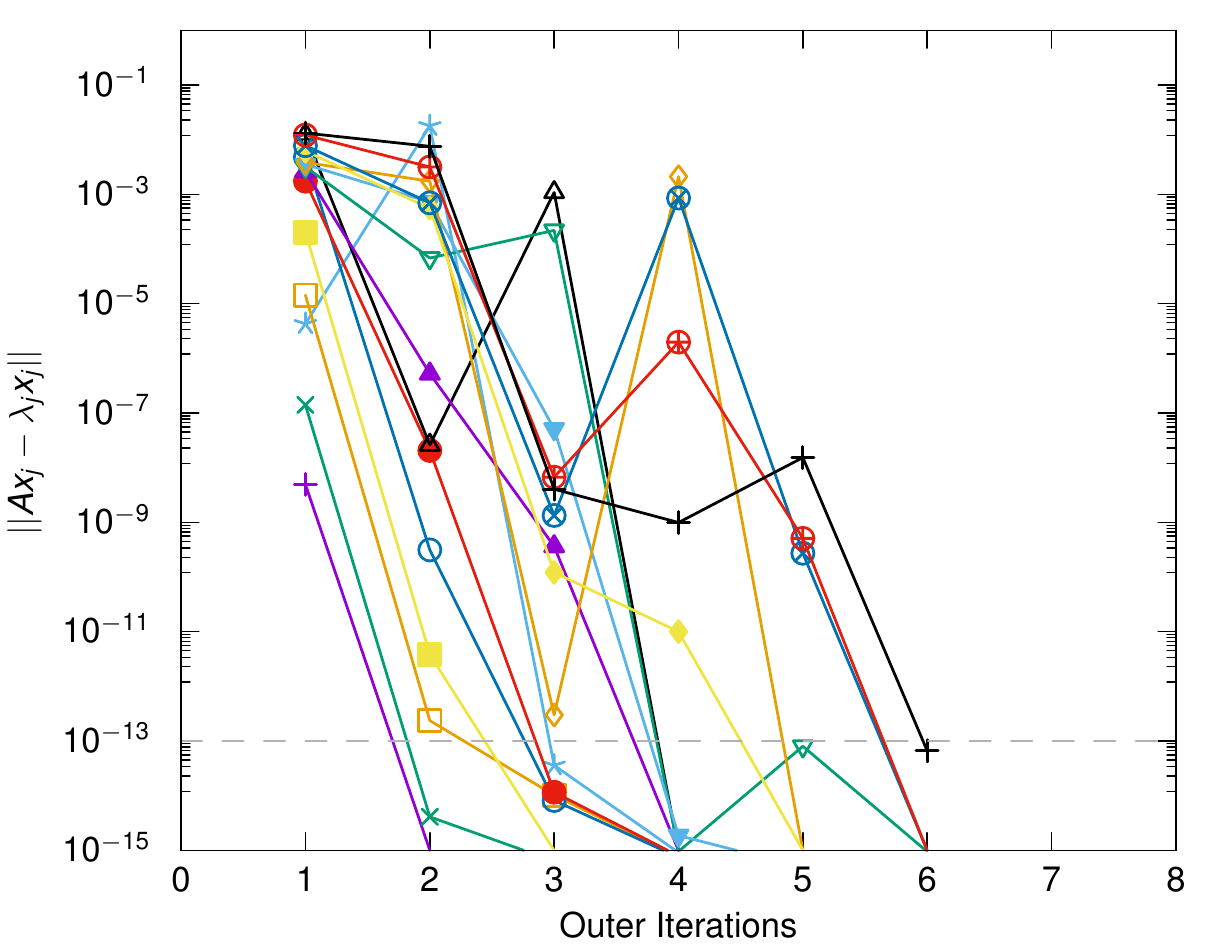}
    \subcaption{TRLAN}
    \label{fig:invAerrTRLAN}
    \end{minipage}
    \caption{Residual history for the small eigenvalues (Case B)}
    \label{fig:invAerrALL}
\end{figure}

\section{Summary}
\label{sec:5}

In this study, we have shown the orthogonality and $A$-orthogonality between the two Krylov subspaces,
$\mathcal{K}_k(A,v)$ and $\mathcal{K}_k(A,Jv^*)$ 
that are generated with the Lanczos algorithm for Hermitian $J$-symmetric matrices $A$.
By employing this property, we proposed the thick-restarted Lanczos algorithm for
Hermitian $J$-symmetric matrices (TRLAN--JSYM)
using which we could efficiently search for one half of the doubly degenerate eigenvectors in $\mathcal{K}_k(A,v)$ 
without the need to explicitly construct $\mathcal{K}_k(A,Jv^*)$.
The other half of the degenerate eigenvectors are simply constructed 
from the converged eigenvectors by utilizing the $J$-symmetry property.

We demonstrated the proposed algorithm TRLAN--JSYM 
for two test cases, the random matrices, and the fermion matrix from the quantum field theory called the TEK model.
The convergence observed for the TRLAN--JSYM algorithm was smoother than that for the TRLAN algorithm, as expected.
The TRLAN--JSYM algorithm performed better than the TRLAN algorithm regarding the matrix-vector multiplication.
The TRLAN algorithm shows the reordering of eigenvalues among the degenerated eigenvalues
caused by the loss of orthogonality between 
the eigenvectors paired with $J$-symmetry during the standard Lanczos iteration
with the finite precision arithmetic.
We did not discuss the mathematical background on the loss of orthogonality in the Lanczos algorithm.
If exact arithmetic was employed for both of the algorithms, 
the TRLAN algorithm becomes identical to the TRLAN--JSYM algorithm, according to Corollary~\ref{coro1}.
However, our algorithm enforces the orthogonality to achieve the smooth convergence behavior
at finite precision arithmetic, resulting in better performance.

%


\begin{acknowledgements}
Numerical computations are performed on the ITO supercomputer system of Kyushu university.
This work is partly supported by Priority Issue 9 to be tackled by using Post K Computer. 
We thank Antonio Gonz\'{a}lez-Arroyo and Masanori Okawa for comments on the draft and 
the details of the Wilson-Dirac operator in the adjoint representation.
We are very grateful to the anonymous reviewer for his/her comments that enhanced the quality of the manuscript.
\end{acknowledgements}

\appendix
\section{Properties of the matrices \texorpdfstring{$D$}{D} \eqref{eq:TEKWDmatrix} and \texorpdfstring{$A$}{A} \eqref{eq:TEKmatrix}}
\label{sec:appendix}
In this appendix we show the algebraic properties of 
the matrices $D$ \eqref{eq:TEKWDmatrix} and $A$ \eqref{eq:TEKmatrix}, including the $J$-symmetry. 
We employ the following explicit form for $\gamma_\mu$:
\begin{align}
  \gamma_1 &=
  \begin{bmatrix*}[r]
      0 &    0 & 0 &-i \\
      0 &    0 &-i & 0 \\
      0 &\pp i & 0 & 0 \\
      i &    0 & 0 & 0
  \end{bmatrix*},\quad
  \gamma_2 =
  \begin{bmatrix*}[r]
      0 &    0 &    0 &-1 \\
      0 &    0 &\pp 1 & 0 \\
      0 &\pp 1 &    0 & 0 \\
     -1 &    0 &    0 & 0
  \end{bmatrix*},\quad
  \gamma_3 =
  \begin{bmatrix*}[r]
      0 & 0 &-i & 0 \\
      0 & 0 & 0 &\pp i \\
      i & 0 & 0 & 0 \\
      0 &-i & 0 & 0
  \end{bmatrix*},\quad
  \gamma_4 =
  \begin{bmatrix*}[r]
      1 &    0 & 0 & 0 \\
      0 &\pp 1 & 0 & 0 \\
      0 &    0 &-1 & 0 \\
      0 &    0 & 0 &-1
  \end{bmatrix*}.
\end{align}
In addition to these, we also have $\gamma_5=\gamma_4\gamma_1\gamma_2\gamma_3$ as
\begin{align}
  \gamma_5 =
  \begin{bmatrix*}[r]
      0 &    0 &\pp 1 &    0 \\
      0 &    0 & 0    &\pp 1 \\
      1 &    0 & 0    &    0 \\
      0 &\pp 1 & 0    &    0
  \end{bmatrix*}.
\end{align}
These $\gamma_\mu$ matrices have the following properties:
\begin{alignat}{2}
  \{\gamma_\mu,\gamma_\nu\} &= 2\delta_{\mu,\nu}I, &\quad&\mbox{for $\mu,\nu=1,\dots,5$},\\
           \gamma_\mu^{\HC} &=  \gamma_\mu,        &\quad&\mbox{for $\mu=1,\dots,5$},\\
             \gamma_1^{\TP}  = -\gamma_1,\quad
             \gamma_2^{\TP} &=  \gamma_2,\quad
             \gamma_3^{\TP} &=&-\gamma_3,\quad
             \gamma_4^{\TP}  =  \gamma_4,\quad
             \gamma_5^{\TP}  =  \gamma_5.
\end{alignat}
The matrix $C=\gamma_4\gamma_2$ has the following properties:
\begin{align}
  C^{-1} &= \gamma_2\gamma_4 = - \gamma_4\gamma_2 = -C,\\
  C^{-1} &= \gamma_2\gamma_4 =   \gamma_2^{\TP}\gamma_4^{\TP} = \left(\gamma_4\gamma_2\right)^{\TP} = C^{\TP},\\
 C\gamma_\mu C^{\TP} &= 
\left.
   \begin{cases}
       \pp \gamma_4\gamma_2 \gamma_1\gamma_2\gamma_4 =  -\gamma_4\gamma_1\gamma_4 = \pp \gamma_1 = -\gamma_1^{\TP} &(\mu=1)\\
       \pp \gamma_4\gamma_2 \gamma_2\gamma_2\gamma_4 =\pp\gamma_4\gamma_2\gamma_4 =   - \gamma_2 = -\gamma_2^{\TP} &(\mu=2)\\
       \pp \gamma_4\gamma_2 \gamma_3\gamma_2\gamma_4 =  -\gamma_4\gamma_3\gamma_4 = \pp \gamma_3 = -\gamma_3^{\TP} &(\mu=3)\\
       \pp \gamma_4\gamma_2 \gamma_4\gamma_2\gamma_4 =  -\gamma_4\gamma_4\gamma_4 =   - \gamma_4 = -\gamma_4^{\TP} &(\mu=4)
   \end{cases}
\right\} = -\gamma_\mu^{\TP}.
\label{eq:CCGamma}
\end{align}

We show the properties of \eqref{eq:G5Hermit} and \eqref{eq:MayoranaProp} in detail. 
To simplify the proof, we suppress the matrix indices of \eqref{eq:TEKWDmatrix} 
and write it as
\begin{align}
  D=I - \kappa \sum_{\mu=1}^{4}\left[ (1-\gamma_\mu)V_\mu + (1+\gamma_\mu)V_\mu^{\TP}\right],
\end{align}
where the direct product of the spinor index and the color index is implicit.

Equation \eqref{eq:G5Hermit} is shown as:
\begin{align}
  \gamma_5 D\gamma_5 &=
\gamma_5\left(I - \kappa \sum_{\mu=1}^{4}\left[ (1-\gamma_\mu)V_\mu + (1+\gamma_\mu)V_{\mu}^{\TP}\right]\right)\gamma_5
\notag\\
&=
I - \kappa \sum_{\mu=1}^{4}\left[ \gamma_5(1-\gamma_\mu)\gamma_5V_\mu + \gamma_5(1+\gamma_\mu)\gamma_5V_{\mu}^{\TP}\right]
\notag\\
&=
I - \kappa \sum_{\mu=1}^{4}\left[ (1+\gamma_\mu)V_{\mu} + (1-\gamma_\mu)V_{\mu}^{\TP}\right],
\notag
\end{align}
where $\{\gamma_5,\gamma_\mu\}=0$ is used. 
Because $V_\mu^{\TP} = V_\mu^{\HC}$ for the matrices in the adjoint representation of the SU($N$) group, 
the last line is identical to
\begin{align}
&=
\left(I - \kappa \sum_{\mu=1}^{4}\left[ (1-\gamma_\mu)V_{\mu} + (1+\gamma_\mu)V_{\mu}^{\TP}\right]\right)^{\HC}
\notag\\
&= D^{\HC}.
\end{align}

Next, we show \eqref{eq:MayoranaProp} in:
\begin{align}
  C D C^{\TP} 
&=  C\left(I - \kappa \sum_{\mu=1}^{4}\left[ (1-\gamma_\mu)V_\mu + (1+\gamma_\mu)V_{\mu}^{\TP}\right]\right)C^{\TP}
\notag\\
&=  I - \kappa \sum_{\mu=1}^{4}\left[ C(1-\gamma_\mu)C^{\TP}V_\mu + C(1+\gamma_\mu)C^{\TP}V_{\mu}^{\TP}\right]
\notag\\
&=  I - \kappa \sum_{\mu=1}^{4}\left[ (1+\gamma_\mu^{\TP})V_\mu + (1-\gamma_\mu^{\TP})V_{\mu}^{\TP}\right]
\notag,
\end{align}
where we used \eqref{eq:CCGamma}. The last line is identical to
\begin{align}
&=  \left(I - \kappa \sum_{\mu=1}^{4}\left[ (1-\gamma_\mu)V_\mu + (1+\gamma_\mu)V_{\mu}^{\TP}\right]\right)^{\TP}
\notag\\
&= D^{\TP}.
\end{align}

We finally show the $J$-symmetry of $A$ \eqref{eq:TEKmatrix}. 
The Hermiticity of $A$ is apparent from \eqref{eq:G5Hermit} and \eqref{eq:TEKmatrix}.
The properties of $J\equiv C\gamma_5=\gamma_4\gamma_2\gamma_5$ are:
\begin{align}
 J^{-1} &=  \gamma_5\gamma_2\gamma_4 
         = -\gamma_5\gamma_4\gamma_2
         =  \gamma_4\gamma_5\gamma_2
         = -\gamma_4\gamma_2\gamma_5
         = - J,\\
 J^{-1} &= \gamma_5\gamma_2\gamma_4 
         = \gamma_5^{\TP}\gamma_2^{\TP}\gamma_4^{\TP} 
         = \left(\gamma_4\gamma_2\gamma_5\right)^{\TP}
         = J^{\TP}.
\end{align}
Because $\gamma_2, \gamma_4,$ and $\gamma_5$ are real matrices, $J$ is real. 
Therefore, the properties of $J\equiv C\gamma_5$ follows those in \eqref{eq:symmetry}.
The $J$-symmetry of $A$ \eqref{eq:TEKmatrix} is shown as:
\begin{align}
JAJ^{-1} 
 &= C \gamma_5 D \gamma_5 D \gamma_5 \gamma_5 C^{\TP} \notag\\
 &= C \gamma_5 D \gamma_5 D C^{\TP}                   \notag\\
 &= \gamma_5 C D \gamma_5 D C^{\TP}                   \notag\\
 &= \gamma_5 C D C^{\TP} C \gamma_5  D C^{\TP}        \notag\\
 &= \gamma_5 C D C^{\TP} \gamma_5 C  D C^{\TP}        \notag\\
 &= \gamma_5  D^{\TP} \gamma_5 D^{\TP}               \notag\\
 &= (D \gamma_5)^{\TP} (D \gamma_5)^{\TP}            \notag\\
 &= \left[(D \gamma_5)(D \gamma_5)\right]^{\TP}      \notag\\
 &= A^{\TP},
\end{align}
where we used $C\gamma_5=\gamma_5C,  C^{\TP}C = I, \gamma_5^{\TP}=\gamma_5, (\gamma_5)^2=I$, and \eqref{eq:MayoranaProp}.
Therefore, $A$ of \eqref{eq:TEKmatrix} is $J$-symmetric.

%
%






\end{document}